\newcommand{\disp}{\displaystyle}
\newcommand{\nc}{\newcommand}
\nc{\G}{{\Gamma}} \nc{\BC}{{\mathbb C}} \nc{\BQ}{{\mathbb Q}}
\nc{\BR}{{\mathbb R}} \nc{\BZ}{{\mathbb Z}} \nc{\BP}{{\mathbb P}} \nc{\PC}{{\BP_1(\BC)}}
\nc{\BN}{{\mathbb N}} \nc{\BM}{{\mathbb M}}
\nc{\fH}{{\mathbb H}}
\nc{\mat}{{\binom{a\,\ b}{c\,\ d}}}
\nc{\U}{{\mathcal U}}
\nc{\PS}{{\mbox{PSL}_2(\BZ)}} \nc{\SL}{{\mbox{SL}_2(\BZ)}}
\nc{\SR}{{\mbox{SL}_2(\BR)}} \nc{\PR}{{\mbox{PSL}_2(\BR)}}
\nc{\GL}{{\mbox{GL}}} \nc{\PQ}{{\mbox{PGL}_2^+(\BQ)}}
\nc{\GR}{{\mbox{GL}_2^+(\BR)}} \nc{\PG}{{\mbox{PGL}_2(\BC)}}
\nc{\GC}{{\mbox{GL}_2(\BC)}}
\nc{\f}{{\mathcal{F}(\fH)}}
\nc{\Cc}{\widehat{\BC}}
\nc{\e}{{E_{\rho}(\G)}}
\nc{\g}{{\gamma}}
\nc{\vm}{{V_{\rho}(\G)}}
\nc{\oo}{{\mathcal O}}
\nc{\M}{{\mbox{M}}}
\nc{\om}{{\omega}}
\nc{\Om}{{\Omega}}
\nc{\TX}{{\widetilde{X}}}
\nc{\ol}{\overline}
\nc{\cl}{{\mathcal L}}
\nc{\ce}{{\mathcal E}}
\nc{\la}{{\lambda}}
\nc{\La}{{\Lambda}}
\nc{\cz}{{\mathcal Z}}
\newtheorem{numbered}{}[section]
\newtheorem{thm}[numbered]{Theorem}
\newtheorem{lem}[numbered]{Lemma}
\newtheorem{remark}[numbered]{Remark}
\newtheorem{prop}[numbered]{Proposition}
\newtheorem{cor}[numbered]{Corollary}
\numberwithin{equation}{section}
\newcommand{\thmref}[1]{Theorem~\ref{#1}}
\newcommand{\propref}[1]{Proposition~\ref{#1}}
\newcommand{\secref}[1]{\S\ref{#1}}
\begin{document}

\title[]{Automorphic Schwarzian equations}
\author[]{Abdellah Sebbar} \author[]{Hicham Saber}
\address{Department of Mathematics and Statistics, University of Ottawa, Ottawa Ontario K1N 6N5 Canada}
\address{Department of Mathematics, University of Ha'il, Kingdom of Saudi Arabia}
\email{asebbar@uottawa.ca}
\email{hicham.saber7@gmail.com}

\subjclass[2010]{11F03, 11F11, 34M05.}
\keywords{Schwarz derivative, Modular forms, Eisenstein series, Equivariant functions, representations of the modular group, Fuchsian differential equations}

\begin{abstract}
	This paper concerns the study of the Schwarz differential equation
	 $\{h,\tau \}=s\,E_4(\tau)$ where $E_4$ is the weight 4 Eisenstein series and $s$ is a complex parameter. In particular, we determine all the values of $s$ for which the solutions $h$  are modular functions for a finite index subgroup of $\SL$. We do so using the theory of equivariant functions on the complex upper-half plane as well as an analysis of the representation theory of $\SL$. This also leads to the solutions to the Fuchsian differential equation $y''+s\,E_4\,y=0$. 
\end{abstract}
\maketitle
\tableofcontents
\section{Introduction}

Let $D$ be a domain in $\BC$ and $f$ a meromorphic function on $D$. The Schwarz derivative, or the Schwarzian of $f$, is defined by
\begin{equation}\label{schwarz}
\{f,z\}\,=\, \left(\frac{f''}{f'}\right)'-\frac12 \left(\frac{f''}{f'}\right)^2\,=\,
\frac{f'''}{f'}-\frac32\left(\frac{f''}{f'}\right)^2.
\end{equation}
It was named after Schwarz by Cayley. However,  Schwarz himslef pointed out that it was discovered by Lagrange in 1781 and it also
appeared in a paper by Kummer in 1836 ~\cite{ov-ta}.

The Schwarz derivative plays an important role in the study of the complex projective line, univalent functions, conformal mapping, Teichmuller spaces
and in the theory of automorphic functions and hypergeometric functions \cite{ahl,duren, ford,mc-se,nehari, s-g}. Most importantly, there is a strong link with  the theory of ordinary differential equations as follows.

Let $R(z)$ be a meromorphic function on $D$ and consider the second order differential equation
\[
y''+\frac{R(z)}{2}\,y=0
\]
with two linearly independent solutions $y_1$ and $y_2$. Then $f=y_1/y_2$ is a solution to the Schwarz differential equation
\[
\{f,z\}\,=\,R(z).
\]
This connection with second order ordinary differential equations leads to the most  important properties 
of the Schwarz derivative, see \secref{mde}.

There is also an important link with automorphic functions for a discrete subgroup $G$ of $\PR$, that is a Fuchsian group of the first kind acting on the upper half-plane $\fH=\{\tau\in\BC \mid \Im(\tau)>0\}$ by linear fractional transformations
\[
\gamma \tau=\frac{a\tau+b}{c\tau+d}\,,\ \ \gamma=\binom{a\,\ b}{c\,\ d}\in \G.
\]
Let $f$ be an automorphic function for $G$, that is $f$ is a meromorphic function on $\fH$ invariant under the action of $G$  (in addition to a meromorphic behavior at the cusps). Using the projective invariance of the Schwarz derivative and the fact that it defines a quadratic differential, we see that  $\{f,\tau\}$ is a weight 4 automorphic form for $G$. Moreover, if $G$ is genus 0 and $f$ is a Hauptmodul, then $\{f,\tau\}$ is an automorphic form for the normalizer of $G $ in $\PR$ ~\cite{mc-se}.
As an example, let $\lambda$ be the Klein elliptic modular function for $\Gamma(2)$, then 
\[
\{\lambda,\tau\}\,=\,\frac{\pi^2}{2}E_4(\tau),
\]
where $E_4$ is the weight 4 Eisenstein series.
However, for a given weight 4 automorphic form $F$ for the Fuchsian group $G $, a meromorphic function $f$ satisfying $\{f,\tau\}=F$ is not necessary an automorphic function. Nevertheless, it defines an interesting class of functions. Indeed, there exists a representation $\rho$ of $G$ into $\GC$ (or $\PG$) such that
for all $\gamma\in G $ and $\tau\in\fH$
\[
f(\gamma\cdot \tau)=\rho(\gamma)\cdot f(\tau),
\]
where the action in both sides is by linear fraction and $f$ is called $\rho-$equivariant. When $\rho=1$ is trivial, then $f$ is an automorphic function for $G $. In addition, the form $F$ is holomorphic on $\fH$ if and only if  the solution $f$ is locally univalent on $\fH$, meaning that its derivative is nowhere vanishing.

We focus on the case $G =\SL$ and $F$ being a holomorphic weight 4 modular form.  In other words, we look at the equation
\begin{equation}
\{h,\tau\}\,=\,s\,E_4(\tau),
\end{equation}
where $s$ is a complex parameter. If a solution $h$ is given, then any other solution is simply a linear fraction of $h$, and their corresponding representations are conjugate by a constant matrix in $\GC$.
Thanks to the Frobenius method for the Fuchsian differential equation, one can write down the shape of the solution $h$. In particular $h$ will have a logarithmic singularity at infinity if and only if $s=2\pi^2n^2$ for an integer $n$. Our approach to study the general solutions consists of analyzing their corresponding representation. Indeed, $h$ will be a modular function if and only if its invariance group $\G =\ker \rho$ has a finite index in $\SL$, or equivalently, Im$\, \rho$ is finite. We distinguish between the irreducible representations and the reducible ones. In the former case, we find that there are only finitely many  possibilities for the level of $\G$ defined to be the least common multiple of the cusp widths.   As a solution $h$ is necessarily locally univalent, $\G$ is a torsion-free, normal and finite index subgroup of $\SL$ and  $h$ realizes a covering map $X(\G)\longrightarrow\PC$ where $X(\G)$ is the modular curve attached to $\G$. Finally, applying the Riemann-Hurwitz to this covering, we deduce that the genus of $\G$ is zero which leads to a finite number of possibilities. 

On the other hand, if $\rho$ is reducible, it turns out that $\G$ cannot have finite index in $\SL$ and so does not provide any modular solution. This  leads to the main result in this paper stating that there are infinitely many values for the parameter $s$ that lead  to a modular function $h$ as a solution to $\{h,\tau\} =sE_4(\tau)$. This happens precisely when $s$ has the form $\disp s=2\pi^2(n/m)^2$ where $m$ and $n$ are positive integers
 with $2\leq m\leq 5$ and $\gcd(m,n)=1$. Here $m$ becomes the level of $\ker\rho$, $n$ is the ramification index at $\infty$ of the covering $h$ and together, with the degree  $d$ of this covering, they  satisfy
 \[
 (6n-m)\nu_{\infty}=12d,
 \]
 where $\nu_{\infty}$ is the parabolic class number of $\G$.   It is worth noting that four of our solutions corresponding to $d=1$ already appeared in \cite{ss2} under different circumstances.

Furthermore, as a solution $h$ to $\{h,\tau\}=sE_4$ has a nowhere vanishing derivative $h'$  on $\fH$,  it is possible to provide a basis of global solutions of the Fuchsian differential equation $\disp y''+\frac{s}{2}E_4\,y=0$ by means of $y_1=h/\sqrt{h'}$ and $y_2=1/\sqrt{h'}$.

Second order modular differential equations have been the subject of study by several authors  \cite{mason-franc,  ka-ko,   ka-za,  milas, mukh,  yang} among others in the context of hypergeometric equations, Virasoro and vertex algebras, and rational conformal theories. In these papers, the equation takes the form
\[
y''+\alpha\, E_2\,y'+\beta\,E_4\,y=0,
\]
with specific values of $\alpha$ and $\beta$.
Our approach is different in the sense that our Fuchsian differential equations do not  contain a term in $y'$ with a quasi-modular coefficient simply because they arise from a Schwarzian differential equation which is our main interest. The differential equations can rather be looked at as Schr\"{o}dinger equation with an automorphic potential in the same spirit that a Lam\'e equation has an elliptic potential.

The paper is organized as follows. In chapter 2 we introduce the notion of equivariant function attached to a two-dimensional complex representation of a Fuchsian group of the first kind. We make explicit how they are closely connected to vector-valued automorphic forms with multiplier system given by the above representation. In chapter 3, we list the main properties of the Schwarz derivative as well as its automorphic properties. We explain how solutions to Schwarzian equations are equivariant functions. In chapter 4, we focus on the modular group, and we show that it does not matter in our context whether we work with representations of $\SL$ or of $\PS$. We then use a result of G. Mason to classify the representations of the modular group having a finite image. In chapter 5, we apply the Frobenius method to our Fuchsian differential equations after a change of variable to find explicit expansions of the solutions of both the Fuchsian differential equations and the Schwarz differential equation. In chapter 6, we show that if there is a solution unramified at infinity, then the kernel of the attached representation is a normal, torsion-free and genus zero congruence subgroup of the modular group. This allows us to explicitly write down solutions in terms of classical modular forms and functions. In chapter 7, we determine when a Schwarzian equation admits solutions that are modular functions in the general case while assuming that the representation is irreducible. Finally, in chapter 8, we show that the case of reducible representation cannot lead to a modular function as a solution to the Schwarz differential equation.

\section{Automorphic forms and Equivariant functions}\label{sec-equiv}

Let $\G$ be a Fuchsian group of the first kind   and let $\rho$ be a finite dimensional representation of $\G$, that is, a homomorphism of $\G$ into $\GL_n(\BC)$, $n\in\BZ_{>0}$. A $n-$dimensional vector-valued automorphic form of weight $k\in \BZ$ and multiplier system $\rho$ is a vector $F(\tau)=(f_1,\ldots f_n)^t$ where $f_1,\ldots ,f_n$ are  meromorphic functions on $\fH$ satisfying 
\begin{equation}\label{vaf}
F(\gamma \tau)\,=\,(c\tau+d)^k\,\rho(\gamma)\,F(\tau)\,,\ \tau\in\fH\,,\ \gamma=\binom{a\ \ b}{c\ \ d}\in\G\,.
\end{equation}
When $n=2$, we define for the pair $(\G,\rho)$  the notion of $\rho-$equivariant function. It is a meromorphic function $h$ on $\fH$ satisfying
\begin{equation}\label{rho-equiv}
h(\gamma \tau)\,=\,\rho(\gamma)\;h(\tau)\,,\ \tau\in\fH\,,\ \gamma\in\G\,,
\end{equation}
where on both sides the action of the matrices is by linear fractional transformation. When $-I_2\in \G$, for this  definition to make sense, we need to have $\rho(-I_2)\in {\mathbb C}^* I_2$. Thus $\rho$ induces a representation $\bar{\rho}$ of the image of $\G$ in $\PS$ into $\PG$, and the same definition for equivariance will be used interchangeably between homogeneous and inhomogeneous Fuchsian groups.

When $\rho$ induces a character of $\Gamma$, i.e. when $\rho(\G)\subseteq{\mathbb C}^* I_2$, then $h$ is a scalar automorphic function for $\Gamma$ (with a multiplier system). When $\rho$ is the trivial (or the  standard  representation) of $\G$, that is, for all $\gamma\in\Gamma$, $\rho(\gamma)=\gamma$, then $h$ is simply called an equivariant function for $\Gamma$. They appeared in \cite{brady} and have been extensively studied from the automorphic, equivariant and elliptic points of view with interesting applications in \cite{ell-zeta,  sb2, ss3, s-s2, ss1}. Examples of such functions are provided as follows: Let $f$ be a weight $k$ automorphic form for $\Gamma$, then 
\begin{equation}\label{rational-equiv}
h_f(\tau)\,=\,\tau\,+\,k\,\frac{f(\tau)}{f'(\tau)}
\end{equation}
is an equivariant function for $\Gamma$ and equivariant functions arising in this way are called rational  \cite{sb1}.

For a given pair $(\G,\rho)$, there is a close connection between 2-dimensional vector-valued automorphic forms of multiplier system $\rho$ and $\rho-$equivariant functions for $\G$. Indeed, if $F=(f_1,f_2)^t$ is a vector-valued automorphic form of multiplier $\rho$ and arbitrary weight, then one can easily verify that $h=f_1/f_2$ is $\rho-$equivariant. A less trivial fact is that every $\rho-$equivariant function arises in this way \cite[Theorem 4.4]{s-s2}. Another nontrivial fact is that for an arbitrary pair $(\G,\rho)$ (even when $\G$ is a Fuchsian group of the second kind), vector-valued automorphic  forms exist in any dimension \cite[Theorem 6.6]{ss4}. As a consequence, $\rho-$equivariant functions always exist for arbitrary $\G$ and an arbitrary 2-dimensional representation $\rho$ of $\G$.

\section{Automorphic differential equations}\label{mde}

The following properties of the Schwarz derivative will be very useful.
The functions involved are all meromorphic functions on the domain $D$.
\begin{itemize}
	\item Projective invariance:
	\begin{equation}	\left\{\frac{af+b}{cf+d}\,,z\right\}\,=\,\{f,z\}\;,\ \mbox{for }
	\binom{a\ \,b}{c\ \,d}\in\GC. \label{sd1}
	\end{equation}
	\item Cocycle property: If $w$ is a function of $z$, then
	\begin{equation}\label{sd2}
	\{f,z\}\,=\,\{f,w\}(dw/dz)^2+\{w,z\}, 
	\end{equation}
	which means that $\{f,z \}$ is a quadratic differential
	\[
	\{f,z\}dz^2\,=\,\{f,w\}dw^2+\{w,z\}dz^2. 
	\]
	\item 
	\begin{equation} \label{sd3} \{f,z\}\,=\,0 \mbox{ if and only if }  f(z)=\frac{az+b}{cz+d}\, \mbox{ for some } a,\,b,\,c,\,d\in\BC
	\end{equation}.
	\item If $\displaystyle w=\frac{az+b}{cz+d}$ with 
	$\displaystyle \binom{a\,\ b}{c\,\ d}\in \GC $, then 
	\begin{equation}
	\{f,z\}\,=\,\{f,w\}\,
	\frac{(ad-bc)^2}{(cz+d)^4}. \label{sd4}\end{equation}
	\item For two meromorphic functions $f$ and $g$ on $D$, 
	\begin{equation}
	\{f,z\}\,=\,\{g,z\}\,\mbox{ if and only if }\, f(z)\,=\,\frac{ag(z)+b}{cg(z)+d}\, \mbox{ for some }  \binom{a\,\, b}{c\,\, d}\in \GC. \label{sd5}
	\end{equation}	
	\item If $w(z)$ is a function of $z$ with $w'(z_0)\neq 0$ for some $z_0\in D$, then in a neighborhood of $z_0$, we have
	\begin{equation} 
	\{z,w\}\,=\,\{w,z\}\,(dz/dw)^2\,. \label{sd6}
	\end{equation}
\end{itemize}
These properties are a consequence of the link with the second order linear differential equations.

Throughout this section, $\G$ is a Fuchsian group of the first kind and $f$  is a weight 4 automorphic form for $\G$.  All the forms and functions are meromorphic unless otherwise stated. We look at  the Schwarz differential equation
\begin{equation}\label{schP}
\{h,\tau\}\,=\,f(\tau).
\end{equation} 
and the related Fuchsian differential equation
\begin{equation}\label{diffP}
y''\,+\,\frac{f}{2}\,y\,=\,0
\end{equation}

\begin{prop}\label{prop1}
	Let $h(\tau)$ be a meromorphic function on $\fH$, then $\{h,\tau\}=f(\tau)$ for a weight 4 automorphic form $f$ for $\G$ if and only if $h$ is $\rho-$equivariant for some 2-dimensional representation $\rho$ of $\G$.
\end{prop}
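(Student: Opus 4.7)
The plan is to prove both directions using essentially just the projective invariance \eqref{sd1}, the cocycle rule \eqref{sd2}, the fact that $\{\g\tau,\tau\}=0$ for $\g\in\GC$ (from \eqref{sd3}), and the identification of two functions with the same Schwarzian given in \eqref{sd5}.

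For the $(\Leftarrow)$ direction, assume $h$ is $\rho$-equivariant, and set $f(\tau):=\{h,\tau\}$. For any $\g=\binom{a\,\ b}{c\,\ d}\in\G$, apply the Schwarzian to $h\circ\g$. By the cocycle property \eqref{sd2} applied with $w=\g\tau$ and using $(\g\tau)'=(c\tau+d)^{-2}$ together with $\{\g\tau,\tau\}=0$, we get
\[
\{h\circ\g,\tau\}\,=\,\{h,\g\tau\}\,(c\tau+d)^{-4}.
\]
On the other hand, since $h(\g\tau)=\rho(\g)\cdot h(\tau)$ is a linear fractional transformation of $h(\tau)$, projective invariance \eqref{sd1} gives $\{h\circ\g,\tau\}=\{h,\tau\}=f(\tau)$. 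Combining, $f(\g\tau)=(c\tau+d)^{4}f(\tau)$, which is precisely the weight $4$ transformation law on $\fH$. Meromorphy at the cusps of $\G$ follows from meromorphy of $h$ there together with the shape of the Schwarzian, so $f$ is a weight $4$ automorphic form for $\G$.

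For the $(\Rightarrow)$ direction, assume $\{h,\tau\}=f(\tau)$ with $f$ weight $4$ automorphic for $\G$. For each $\g\in\G$, the same cocycle computation as above gives
\[
\{h\circ\g,\tau\}\,=\,\{h,\g\tau\}(c\tau+d)^{-4}\,=\,f(\g\tau)(c\tau+d)^{-4}\,=\,f(\tau)\,=\,\{h,\tau\}.
\]
By property \eqref{sd5}, there exists a matrix $\rho(\g)\in\GC$, well defined in $\PG$, such that $h(\g\tau)=\rho(\g)\cdot h(\tau)$ for all $\tau\in\fH$ (note $h$ is non-constant since its Schwarzian is defined, so $\rho(\g)$ is uniquely determined as an element of $\PG$).

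It remains to show $\rho$ is a homomorphism. For $\g_1,\g_2\in\G$ and $\tau\in\fH$,
\[
\rho(\g_1\g_2)\cdot h(\tau)\,=\,h(\g_1\g_2\tau)\,=\,\rho(\g_1)\cdot h(\g_2\tau)\,=\,\rho(\g_1)\rho(\g_2)\cdot h(\tau),
\]
and uniqueness in $\PG$ forces $\rho(\g_1\g_2)=\rho(\g_1)\rho(\g_2)$. Thus $\bar\rho:\G\to\PG$ is a representation, and any lift to $\GC$ produces a $2$-dimensional representation $\rho$ for which $h$ is $\rho$-equivariant in the sense of \eqref{rho-equiv}.

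I expect no serious obstacle: the whole argument is essentially bookkeeping with \eqref{sd1}, \eqref{sd2}, \eqref{sd5}. The one point needing care is verifying that $\rho(\g)$ is well defined as a Möbius transformation (hence that the homomorphism identity can be read off pointwise); this uses non-constancy of $h$. A minor additional remark is that in the forward direction one may need to observe compatibility with $-I_2\in\G$ (when applicable), which corresponds to the condition $\rho(-I_2)\in\BC^* I_2$ discussed in \secref{sec-equiv}.
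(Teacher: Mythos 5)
Your proposal is correct and follows essentially the same route as the paper: both directions rest on the change-of-variable behaviour of the Schwarzian under $\gamma\in\G$ (you derive it from \eqref{sd2} and \eqref{sd3}, the paper invokes \eqref{sd4} directly, which is the same identity) combined with \eqref{sd5} to produce the matrix $\rho(\gamma)$ and with projective invariance \eqref{sd1} for the converse. The only additions are that you explicitly check the homomorphism property of $\rho$ and its well-definedness in $\PG$, which the paper leaves implicit; these are correct and harmless refinements, not a different method.
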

\begin{proof}
Suppose that $f$ is a weight 4 automorphic form $f$ for $\G$ and $\{h,\tau\}=f$.
Using the properties of the Schwarz derivative we have, for $\displaystyle \gamma=\mat\in\Gamma$,
\begin{align*}
(c\tau+d)^4\,f(\tau)\,&=\,
f\left(\frac{a\tau+b}{c\tau+d}\right)\\
&=\,	\left\{h\left(\frac{a\tau+b}{c\tau+d}\right),\frac{a\tau+b}{c\tau+d}\right\}\\
&=\,(c\tau+d)^4\,\left\{h\left(\frac{a\tau+b}{c\tau+d}\right),\tau\right\}\quad\mbox{ using } \,\eqref{sd4}.
\end{align*}
Therefore,
\[
\{h,\tau\}\,=\,\left\{h\left(\frac{a\tau+b}{c\tau+d}\right),\tau\right\}.
\]
Hence, according to \eqref{sd5}, there exists $\displaystyle \binom{A\,\ B}{C\,\ D}\in\GC$ such that
\[
h\left(\frac{a\tau+b}{c\tau+d}\right)\,=\,\frac{Ah(\tau)+B}{Ch(\tau)+D}.
\]
This defines a 2-dimensional representation $\rho$ of $\Gamma$ in $\PG$ such that $h$ is $\rho-$equivariant. Conversely, using the same properties \eqref{sd4} and \eqref{sd5}, it is easy to see that if $h$ is $\rho-$equivariant, then $f=\{h,\tau\}$ is a weight 4 automorphic form for $\G$.
	\end{proof}
In the meantime, if $h_1$ and $h_2$ are two solutions to $\disp \{h,\tau\}=f$, then there exists $\sigma\in\GC$ such that $h_2(\tau)=\sigma h_1(\tau)$. Let $\rho_1$ (resp. $\rho_2$) be the  2-dimensional
complex representation   of $\Gamma$ such that $h_1$ is $\rho_1 -$equivariant (resp. $h_2$ is $\rho_2  -$equivariant). It is easy to show the following
\begin{prop}\label{sigma}
	The representations $\rho_1$ and $\rho_2$ are conjugate. More precisely, \[\rho_2=\sigma \rho_1 \sigma^{-1}.\]
\end{prop}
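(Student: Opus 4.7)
The plan is to trace equivariance through the change of variable $h_2 = \sigma \cdot h_1$ and read off the conjugation relation directly. The argument should be a short and essentially formal computation; the only genuine subtlety is checking that an equality of Möbius transformations applied to a meromorphic function forces an equality of the transformations themselves.

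First, I would fix $\gamma \in \Gamma$ and compute $h_2(\gamma \tau)$ in two ways. On one hand, by definition of $\rho_2$-equivariance,
\[
h_2(\gamma \tau) \,=\, \rho_2(\gamma) \cdot h_2(\tau),
\]
where the dot denotes the linear fractional action. On the other hand, using $h_2 = \sigma \cdot h_1$ and the $\rho_1$-equivariance of $h_1$,
\[
h_2(\gamma \tau) \,=\, \sigma \cdot h_1(\gamma \tau) \,=\, \sigma \cdot \bigl(\rho_1(\gamma) \cdot h_1(\tau)\bigr) \,=\, \bigl(\sigma \rho_1(\gamma)\bigr) \cdot h_1(\tau).
\]
Since $h_1(\tau) = \sigma^{-1} \cdot h_2(\tau)$, this rewrites as
\[
h_2(\gamma \tau) \,=\, \bigl(\sigma \rho_1(\gamma) \sigma^{-1}\bigr) \cdot h_2(\tau).
\]

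Comparing the two expressions gives that, as elements of $\PG$ acting on $h_2(\tau)$,
\[
\rho_2(\gamma) \cdot h_2(\tau) \,=\, \bigl(\sigma \rho_1(\gamma) \sigma^{-1}\bigr) \cdot h_2(\tau) \qquad\text{for all } \tau \in \fH.
\]
The main (mild) obstacle is to upgrade this pointwise agreement to equality of Möbius transformations. The point is that $h_2$ is a non-constant meromorphic function on $\fH$ --- indeed, $\{h_2, \tau\} = f \neq 0$ in particular rules out $h_2$ being a constant, and a constant $h_2$ would even be incompatible with the Schwarzian being defined. Consequently $h_2(\fH)$ contains infinitely many values, and any two elements of $\PG$ that agree on three distinct points of $\PC$ are equal. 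Hence $\rho_2(\gamma) = \sigma \rho_1(\gamma) \sigma^{-1}$ in $\PG$.

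Since the representations $\rho_1, \rho_2$ only enter the problem through their action by linear fractional transformations (they are defined precisely up to the scalars $\BC^* I_2$, cf.\ \secref{sec-equiv}), this identity in $\PG$ is exactly the content of the proposition. As $\gamma$ was arbitrary, the proof is complete.
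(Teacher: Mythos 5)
Your argument is correct and is precisely the routine computation the paper has in mind when it states the proposition with the remark ``It is easy to show the following'' and omits the proof: compare the two expressions for $h_2(\gamma\tau)$ and use that a non-constant meromorphic $h_2$ takes infinitely many values, so two M\"obius transformations agreeing on its image coincide in $\PG$. Nothing further is needed.
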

\begin{thm}
	Suppose that $f$ is a weight $4$ holomorphic automorphic form for $\G$.
	\begin{enumerate}
		\item  If $y_1$ and $y_2$ are two linearly independent holomorphic solutions to $ y''+\frac{1}{2}\,f\,y=0$ then $\displaystyle F=\binom{y_1}{y_2}$ is a weight -1 vector-valued automorphic form for some multiplier system $\rho$. Furthermore, $h=y_1/y_2$ is a $\rho-$equivariant function satisfying $\{h,\tau\}=f$.
		\item If $h$ be a solution to $\{h,\tau\}=f$, then $y_1=h/\sqrt{h'}$ and $y_2=1/\sqrt{h'}$ are two linearly independent holomorphic solutions to $ y''+\frac{1}{2}\,f\,y=0$.
	\end{enumerate}

\end{thm}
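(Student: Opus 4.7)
The plan is to prove both parts by direct differential-operator manipulations: in Part (1) the weight-$4$ transformation law of $f$ turns solutions of the ODE into weight $-1$ vector-valued automorphic forms, and in Part (2) the definition of the Schwarzian lets one write the solutions down explicitly.

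For Part (1), I first check that if $y$ solves $y''+(f/2)y=0$, then for any $\gamma=\binom{a\ b}{c\ d}\in\G$ the function $\tilde{y}(\tau):=(c\tau+d)\,y(\gamma\tau)$ solves it as well. Setting $\tilde{y}(\tau)=(c\tau+d)^{\alpha}y(\gamma\tau)$ and expanding with the chain rule, the $y''(\gamma\tau)$ contribution cancels the $f\tilde{y}$ term via $f(\gamma\tau)=(c\tau+d)^{4}f(\tau)$ combined with the ODE for $y$, while the coefficients of $y(\gamma\tau)$ and $y'(\gamma\tau)$ in the remainder are $\alpha(\alpha-1)c^{2}(c\tau+d)^{\alpha-2}$ and $(2\alpha-2)c(c\tau+d)^{\alpha-3}$, both of which vanish exactly when $\alpha=1$. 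Since $y_1,y_2$ span the solution space, there is a matrix $\rho(\gamma)\in\GC$ with
\[
F(\gamma\tau)=(c\tau+d)^{-1}\,\rho(\gamma)\,F(\tau),
\]
which is exactly the weight $-1$ case of \eqref{vaf}. The homomorphism property $\rho(\gamma_1\gamma_2)=\rho(\gamma_1)\rho(\gamma_2)$ is a routine cocycle check using the identity $(c_1\,\gamma_2\tau+d_1)(c_2\tau+d_2)=C\tau+D$ for the bottom row $(C,D)$ of $\gamma_1\gamma_2$. Writing $\rho(\gamma)=\binom{A\ B}{C\ D}$ and dividing the two components of $F(\gamma\tau)$ yields $h(\gamma\tau)=(Ah+B)/(Ch+D)=\rho(\gamma)\cdot h(\tau)$, so $h$ is $\rho$-equivariant. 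Finally, $\{h,\tau\}=f$ is the classical fact that the ratio of any two linearly independent solutions of $y''+(R/2)y=0$ has Schwarzian $R$.

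For Part (2), the holomorphy of $f$ makes $h$ locally univalent (as noted in the introduction), so $h'$ is nowhere zero on the simply connected domain $\fH$ and $\sqrt{h'}$ admits a single-valued holomorphic branch. Setting $y_2=(h')^{-1/2}$, a straightforward differentiation yields
\[
y_2''=-\tfrac{1}{2}(h')^{-1/2}\left[\frac{h'''}{h'}-\tfrac{3}{2}\left(\frac{h''}{h'}\right)^{2}\right]=-\tfrac{1}{2}\{h,\tau\}\,y_2=-\tfrac{f}{2}\,y_2,
\]
so $y_2$ solves the ODE. For $y_1=h\,y_2$, one expands $(hy_2)''=h''y_2+2h'y_2'+hy_2''$, observes the cancellation $h''y_2+2h'y_2'=0$ (using $y_2'=-\tfrac{1}{2}(h')^{-3/2}h''$), and concludes $y_1''=hy_2''=-(f/2)y_1$. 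Linear independence is immediate from the Wronskian $W(y_1,y_2)=-h'y_2^{2}=-1\neq 0$. The only real subtlety is the $\pm$ choice of $\sqrt{h'}$, but it only multiplies $(y_1,y_2)$ by a common sign and hence merely conjugates the resulting representation (consistent with \propref{sigma}); the calculation itself is essentially mechanical, so I do not expect any genuine obstacle.
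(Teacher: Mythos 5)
Your part (1) is essentially the paper's proof: check that $y\mapsto (c\tau+d)\,y(\gamma\tau)$ preserves the solution space (the paper calls this a ``straightforward calculation''; your exponent analysis forcing $\alpha=1$ just makes it explicit), deduce the matrix $\rho(\gamma)$ from linear independence, and divide the components to get equivariance of $h=y_1/y_2$; the Schwarzian identity $\{y_1/y_2,\tau\}=f$ is the same classical fact the paper invokes. Part (2) also follows the paper's computation ($y_2''=-\tfrac12\{h,\tau\}y_2$, then $y_1=hy_2$, Wronskian $-1$), but it has one genuine gap.

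In part (2) you treat $h'$ as a nowhere-vanishing \emph{holomorphic} function on the simply connected domain $\fH$ and conclude that $\sqrt{h'}$ has a single-valued holomorphic branch. A solution $h$ of $\{h,\tau\}=f$ is in general only meromorphic, so $h'$ may have poles, $\fH$ minus the polar set is not simply connected, and---more to the point---the stated conclusion that $y_1=h/\sqrt{h'}$ and $y_2=1/\sqrt{h'}$ are \emph{holomorphic} is not automatic at the poles of $h$. The paper spends half of its proof of (2) on exactly this: a pole of $h$ of order $\geq 2$ would produce a double pole of $\{h,\tau\}$, contradicting the holomorphy of $f$, so every pole of $h$ is simple; a simple pole is a regular point of the Schwarzian; $h'$ then has a double pole there (so the local monodromy of $\sqrt{h'}$ is trivial and the square root stays single-valued), $1/\sqrt{h'}$ acquires a simple zero, and both $y_1$ and $y_2$ extend holomorphically across the pole. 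Without this analysis, $y_1$ could a priori fail to be holomorphic at a higher-order pole of $h$, so you should add it; once it is in place, your argument coincides with the paper's.
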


\begin{proof} First, since $\fH$ is simply connected and $f$ is holomorphic, linearly independent global solutions to $y''+\frac{1}{2}\,f\,y=0$ always exist.
To prove (1), let $y$ be a holomorphic solution, then for $ \gamma=\binom{a\ b}{c\ d}\in\G$, a straightforward calculation shows that the function $y^*(\tau)=(c\tau+d)\;y(\gamma \tau)$ is also a holomorphic solution. Therefore,  if $y_1$ and $y_2$ are two linearly independent solutions, then $y_1^*$ and $y_2^*$ are also two linearly independent solutions. It follows that there exists a 2-dimensional representation $\rho$ of $\G$ such that $\displaystyle \binom{y_1^*}{y_2^*}=\rho(\gamma)\binom{y_1}{y_2}$. In other words, 
 $\displaystyle F=\binom{y_1}{y_2}$ is a weight -1 vector-valued automorphic form of multiplier $\rho$ (in particular $\rho(-I_2)=-I_2$). Moreover,  $h=y_1/y_2$ is a $\rho-$equivariant function satisfying $\{h,\tau\}=f$.
 
 As for (2), let $h$ be a solution to $\{h,\tau\}=f$, then according to the expression \eqref{schwarz}, if $h'(\tau_0)=0$, then $\tau_0$ is a double pole of $\{h,\tau\}$. But since $f$ is holomorphic everywhere, we see that $h'(\tau)$	is nowhere vanishing on $\fH$ (We choose the principal branch of the square root). In other words $h$ is locally univalent on $\fH$ and the square root $\sqrt{h'(\tau)}$ is defined everywhere as a meromorphic function. Furthermore, from \eqref{schwarz} again, we see that if $h$ has a multiple pole at $\tau_0$, then $\{h,\tau\}$ has a double pole at $\tau_0$ which is not the case since again $f$ is holomorphic everywhere. A simple analysis shows that if $\tau_0$ is a  pole of order 1 for $h$, then it is a regular point for its Schwarz derivative. We deduce that all the poles of $h$, if there are any,  are simple, and thus  they are double poles for $h'(\tau)$ and simple zeros for $1/\sqrt{h'(\tau)}$. We deduce that $y_1=h/\sqrt{h'}$ and $y_2=1/\sqrt{h'}$ are holomorphic on $\fH$.  Differentiating twice $y_1$ gives
\[
y_1''=-\frac12  h{h'}^{-1/2} \{h,\tau\}
\]
and so $y_1$ is a solution to $y''+\frac{1}{2}\,f\,y=0$ and the same is true for $y_2$.
\end{proof}

\section{Finite image representations}\label{reps}
If $\G$ is a subgroup of $\SL$, we denote by $\bar{\G}$
 the corresponding homogeneous subgroup of $\PS$ and let $\pi$ denote the natural surjection $\pi:\SL\longrightarrow \PS$ as well as $\pi:\GC\longrightarrow\PG$.
 If $\rho$ is a representation of $\SL$ such that $\rho(-I_2)\in{\BC^*I_2}$, then $\rho$ induces a representation $\bar{\rho}$ of $\PS$ in $\PG$ such that 
 \begin{equation}\label{cd}
 \pi\circ\rho=\bar{\rho}\circ\pi.
 \end{equation}
 The commutator group $\G'$ of $\SL$ is an index 12 normal congruence subgroup of $\SL$ of level 6 and $\bar{\G}'$, the commutator group of $\PS$ is a normal level 6 congruence subgroup of $\PS$ of index 6 \cite{rankin}. In addition, $\bar{\G}'$ has genus 1. Furthermore, the quotient $\SL/{\G'}$ (resp. $\PS/{\bar{\G }'}$) is a cyclic subgroup of order 12 generated by the class of  $T=\binom{1\ \ 1}{0\ \ 1}$ (resp. a cyclic subgroup of order 6 generated by the class of $T(\tau)=\tau+1$). It follows that the group of characters of $\SL$ (which are trivial on $\G'$) is cyclic generated by a character assigning to $T$ a primitive 12th  root of unity. In the case of $\PS$, the group of characters is cyclic of order 6 generated by any character assigning a primitive 6th root of unity to $T$.
 
 We now focus on representations of $\SL$ or $\PS$ having finite images or equivalently finite index kernels.
 \begin{prop}
 	Let $\rho$ be a representation of $\SL$ and let $\bar{\rho}$ be the induced representation of $\PS$. Then Im$\,\rho$ is finite if and only Im$\,\bar{\rho}$ is finite.
 \end{prop}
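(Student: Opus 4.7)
My plan is to use the relation $\pi\circ\rho=\bar\rho\circ\pi$ together with surjectivity of $\pi:\SL\to\PS$ for one direction, and then analyze the intersection $\rho(\SL)\cap\BC^*I_2$ for the other direction, using the characterization of $\SL^{ab}$ recalled just before the proposition.

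The forward implication is immediate. Since $\pi$ is surjective, $\bar\rho(\PS)=\bar\rho(\pi(\SL))=\pi(\rho(\SL))$ by \eqref{cd}; if $\rho(\SL)$ is finite, so is its continuous image under $\pi$.

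For the converse, assume $\bar\rho(\PS)$ is finite. The same computation shows $\pi(\rho(\SL))=\bar\rho(\PS)$ is finite, so in the short exact sequence
\[
1\longrightarrow \rho(\SL)\cap\ker\pi\longrightarrow \rho(\SL)\xrightarrow{\pi}\pi(\rho(\SL))\longrightarrow 1,
\]
it suffices to show the kernel $\rho(\SL)\cap\BC^*I_2$ is finite. If $\alpha I_2=\rho(\gamma)$ belongs to this intersection, then $\alpha^2=\det\rho(\gamma)$. Now $\det\circ\rho:\SL\longrightarrow\BC^*$ is a character of $\SL$, and the excerpt recalls that the character group of $\SL$ is cyclic of order $12$, so $\det\rho(\gamma)$ is a $12$th root of unity. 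Hence $\alpha$ is a $24$th root of unity, and $\rho(\SL)\cap\BC^*I_2$ has at most $24$ elements. Combined with finiteness of $\pi(\rho(\SL))$, this gives $|\rho(\SL)|\leq 24\,|\bar\rho(\PS)|<\infty$.

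No step is really the main obstacle here; the only subtlety is making sure to invoke the fact about $\SL^{ab}\cong\BZ/12\BZ$ rather than trying to argue directly on the level of $\G=\pi^{-1}(\ker\bar\rho)$, where one would otherwise have to juggle a character of a non-explicit finite-index subgroup. Going through the determinant character of all of $\SL$ is what makes the bound uniform and the argument painless.
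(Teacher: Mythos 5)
Your proof is correct and follows essentially the same route as the paper: reduce the converse to finiteness of $\mbox{Im}\,\rho\cap\BC^*I_2$ via the quotient $\mbox{Im}\,\rho/(\mbox{Im}\,\rho\cap\BC^*I_2)\cong\mbox{Im}\,\bar{\rho}$, then use the fact that $\det\circ\rho$ is a character of $\SL$ of order dividing $12$ to conclude the scalars in the image are $24$th roots of unity. If anything, your version is slightly tighter, since the paper's additional remark about $\rho^{-1}(\BC^*I_2)$ being finitely generated is not actually needed once one knows the scalars are $24$th roots of unity.
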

\begin{proof}
	Since $\bar{\rho}\circ\pi=\pi\circ\rho$, it is clear that if Im$\,\rho$ is finite then so is Im$\,\bar{\rho}$. We now suppose that Im$\,\bar{\rho}$ is finite. It follows that Im$\,\bar{\rho}\circ\pi$ is also finite and
	\[
	\mbox{Im}\,\bar{\rho}\circ\pi=\pi(\mbox{Im}\,\rho)=
	(\mbox{Im}\,\rho\cdot\BC^*I_2)/\BC^*I_2=
	\mbox{Im}\,\rho/(\mbox{Im}\,\rho\cap\BC^*I_2).
	\]
	Thus, to prove that Im$\,\rho$ is finite, we only need to prove that $\mbox{Im}\,\rho\cap\BC^*I_2$ is finite. Let $\G=\rho^{-1}(\BC^*I_2)=\mbox{Ker}\,\pi\circ \rho$, then $\rho$ acts as a character on $\G $ with $\rho(\gamma)=\chi(\gamma)I_2$. In the meantime, $\mbox{det}\,\rho$ is a character of $\SL$
	 of order dividing 12,  hence $\chi^{24}=1$. It follows that  $\rho(\G)=\mbox{Im}\,\rho\cap\BC^*I_2$ is finite if we can establish that $\G$ is finitely generated. To see this, as $\G=\mbox{Ker}\,\pi\circ \rho$, we have $\SL/\G\cong \mbox{Im}\,\pi\circ\rho =\mbox{Im}\,\bar{\rho}\circ\pi$ which is finite, and so $\G$ has a finite index in $\SL$ and thus finitely generated.
	\end{proof}
 We now start with a representation $\bar{\rho}$ of $\PS$ such that Im$\,\bar{\rho}$ is finite. We also assume that $\bar{\rho}$ has a lift $\rho$ to $\SL$ such that \eqref{cd} holds and hence $\rho$ has also a finite image according to the above proposition. As all our representations arise from equivariant functions, this will be always the case. Indeed,  if $h$ is $\bar{\rho}-$equivariant , then according to \cite[Theorem 6.6]{ss4}, then 
 $h=f_1/f_2$ where $F=[f_1,f_2]^t$ is a vector-valued modular form of multiplier $\rho$ and weight -1. In particular $\rho(-I_2)=-I_2$. 
 
  Assume first that $\rho$ is irreducible, then according to \cite{mason}, up to a twist by a character, $\rho$ is unitary and the order of $\rho(T)$ is not twice an odd number. Such a finite image representation is  referred to as a basic representation of $\SL$ in loc. cit. Since multiplying $\rho$ by a character does not affect $\bar{\rho}$ we can always assume that $\rho$ is a basic representation.
  \begin{thm}\cite[Theorem 3.5]{mason}
  	Let $\rho$ be a basic representation of $\SL$. Set $\G = \mbox{Ker}\,\rho$, $G = \SL/\G$, and
  	let $N$ be the order of $\rho(T )$. There are just four possibilities for $\G$, and one of the following holds:
  	\begin{enumerate}
  	\item $G$ is the binary dihedral group of order 12, $N = 4$.
  	\item $G$ is the binary tetrahedral group of order 24,  $N = 3$.
  	\item  $G$ is the binary octahedral group of order 48, $N = 8$.
  	\item  $G$ is the binary icosahedral group of order 120, $N = 5$.
  	  \end{enumerate}
  	In each case $\G(N)\subseteq \G$.
  \end{thm}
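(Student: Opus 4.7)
The plan is to reduce the classification to a question about finite subgroups of $SO(3)$ via the projective image, use the presentation $\PS \cong \BZ/2 \ast \BZ/3$ to cut down the possibilities, and then recover $N$ from the lift to $\SL$.

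Since $\rho$ is basic, and hence (up to the already-performed character twist) unitary, the image $G = \mathrm{Im}\,\rho$ is a finite subgroup of $U(2)$, and because $\rho$ is irreducible Schur's lemma forces $Z(G) \subseteq \BC^{*}I_{2}$. The projective image $\bar G = G/Z(G)$ then embeds in $PU(2) \cong SO(3)$. Irreducibility together with $\rho(-I_{2}) = -I_{2}$ (forced by the vector-valued form setup) make $G$ a genuine double cover of $\bar G$. By the classical classification of finite rotation groups, $\bar G$ is cyclic, dihedral, tetrahedral, octahedral, or icosahedral; the cyclic case would force $G$ to be abelian and $\rho$ reducible, so it is excluded.

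Next, because $\bar G$ is a quotient of $\PS \cong \BZ/2 \ast \BZ/3$ generated by the image of $\bar S$ (of order $2$) and of $\bar{ST}$ (of order $3$), a short check rules out every dihedral $D_{n}$ except $D_{3} \cong S_{3}$: if $3 \mid n$ then the subgroup generated by a reflection and the unique cyclic subgroup of order $3$ in $D_{n}$ is only $D_{3}$. This leaves the four possibilities $\bar G \in \{S_{3}, A_{4}, S_{4}, A_{5}\}$, whose unique double covers sitting inside $SU(2)$ are precisely the binary dihedral, binary tetrahedral, binary octahedral, and binary icosahedral groups, of orders $12, 24, 48, 120$. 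To pin down $N$, note that $\bar T = \bar S \cdot \bar{ST}$ in $\PS$, so the order of $\bar T$ in $\bar G$ is read off from the standard triangle presentations $\langle s, r : s^{2} = r^{3} = (sr)^{k} = 1 \rangle$ with $k = 2, 3, 4, 5$. The lift $\rho(T)$ then has order $k$ or $2k$: in cases $k = 2, 4$ the $SU(2)$-lift of a rotation of order $k$ in $SO(3)$ is always of order $2k$ (the half-angle argument), while for $k = 3, 5$ the two lifts are of orders $k$ and $2k$, and the basic condition (that $\mathrm{ord}(\rho(T))$ is not twice an odd number) selects $N = k$. Together this yields $N = 4, 3, 8, 5$ respectively.

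Finally, for the containment $\G(N) \subseteq \G$: in the tetrahedral and icosahedral cases one has the classical isomorphisms $\tilde A_{4} \cong SL_{2}(\mathbb{F}_{3})$ and $\tilde A_{5} \cong SL_{2}(\mathbb{F}_{5})$, and matching the generators $S, T$ of $\SL$ with their counterparts modulo $N$ shows that $\rho$ factors through the natural reduction $\SL \twoheadrightarrow SL_{2}(\mathbb{F}_{N})$, giving the containment at once. For the binary dihedral and binary octahedral cases $G$ is not itself a principal congruence quotient, and this is the step I expect to be the main obstacle: one must either exhibit an explicit surjection $SL_{2}(\BZ/N\BZ) \twoheadrightarrow G$ by presenting $G$ on generators lifting the standard generators of $SL_{2}(\BZ/N\BZ)$, or invoke Wohlfahrt's theorem, using that $T^{N} \in \ker\rho$ together with normality of $\ker\rho$ to conclude $\G(N) \subseteq \G$.
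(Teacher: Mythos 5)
The paper does not actually prove this statement: it is quoted verbatim from Mason with a citation, so there is no internal proof to compare against. Your sketch is in effect an attempt to reconstruct Mason's own argument, and its overall strategy (finite subgroups of $SO(3)$ arising as quotients of $\PS\cong\BZ/2*\BZ/3$, then lifting to dimension $2$) is the right one. But there are two genuine gaps. The first is the passage from $\bar G$ to $G$. Schur's lemma gives $Z(G)=G\cap\BC^{*}I_2$, and $\rho(-I_2)=-I_2$ gives $\{\pm I_2\}\subseteq Z(G)$, but nothing you say forces equality, and without it $G$ need not be the binary group of order $2|\bar G|$. Concretely, if $\beta$ is the binary icosahedral representation and $\chi$ is the order-$3$ character of $\SL$, then $\chi\otimes\beta$ is unitary and irreducible with $|(\chi\otimes\beta)(T)|=15$, which is odd and hence not twice an odd number, yet its image is the direct product of the binary icosahedral group with a cyclic group of order $3$, of order $360$ and with scalar subgroup of order $6$. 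So the paraphrased definition of ``basic'' you are working from does not suffice; one needs Mason's actual normalization of the eigenvalues of $\rho(T)$ (equivalently, control of $\det\rho$) both to pin $Z(G)$ down to $\{\pm I_2\}$ and to justify placing $G$ inside $SU(2)$ when you compute the order of the lift of $\bar T$.

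The second gap is the containment $\G(N)\subseteq\G$ in the octahedral case $N=8$, which neither of your proposed fallbacks covers. Normality of $\G$ together with $T^{8}\in\G$ only yields $\Delta(8)\subseteq\G$, and $[\G(8):\Delta(8)]=\infty$ (this is precisely the proposition the paper recalls in Section 6), so the normal-closure argument that works for $N\le 5$ fails here; Wohlfahrt's theorem cannot simply be ``invoked,'' because it presupposes that $\G$ is a congruence subgroup, which is exactly what is at stake; and there is no analogue of the isomorphisms with $\mathrm{SL}_2(\mathbb{F}_3)$ and $\mathrm{SL}_2(\mathbb{F}_5)$ for $N=8$. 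Mason settles this case by an explicit computation with the binary octahedral group (the paper's subsequent Corollary even alludes to this), and that computation is the missing ingredient. You correctly flagged this step as the likely obstacle; as written, the proof is incomplete there, and also at the scalar-subgroup step above.
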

\begin{cor} Let $\rho$ be a basic representation of $\SL$ and $\G=\mbox{Ker}\,\rho$.
	If $N$ be the order of $\rho(T)$, then $\G(N)\subseteq \pm \G $ for $N\in\{3,4,5\}$.
\end{cor}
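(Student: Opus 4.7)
The plan is to deduce the corollary directly from the preceding theorem of Mason, which is the substantive input. Since $\rho$ is basic and $N=\mbox{ord}\,\rho(T)\in\{3,4,5\}$, this hypothesis narrows us to three of the four cases in the theorem: case (2) with $N=3$ (binary tetrahedral), case (1) with $N=4$ (binary dihedral), and case (4) with $N=5$ (binary icosahedral). The remaining case (3), the binary octahedral group with $N=8$, is ruled out by the assumption on $N$.

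In each of these three cases, Mason's theorem already concludes with the sharper inclusion $\G(N)\subseteq\G$; since trivially $\G\subseteq\pm\G$, the desired inclusion $\G(N)\subseteq\pm\G$ follows immediately. So the proof is essentially a one-line extraction from the classification, with no genuine obstacle; the only thing to check is that the three listed values of $N$ do exhaust the cases where Mason's theorem is applied, which we did above.

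The reason for formulating the inclusion with $\pm\G$ rather than the sharper $\G$ is, presumably, compatibility with the inhomogeneous picture that pervades the paper. Since $\pm\G$ is precisely the preimage in $\SL$ of the image $\bar{\G}\subseteq\PS$ under the projection $\pi:\SL\to\PS$, the inclusion $\G(N)\subseteq\pm\G$ is equivalent to $\pi(\G(N))\subseteq\bar{\G}$. This is the natural formulation when one transits, as the authors do repeatedly, between homogeneous representations of $\SL$ and their descent to $\PS$, and it is in this descended form that the inclusion will be used in the subsequent genus-zero and Riemann--Hurwitz arguments.
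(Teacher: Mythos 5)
For the dihedral ($N=4$), tetrahedral ($N=3$) and icosahedral ($N=5$) cases your argument is exactly the paper's: Mason's theorem gives $\G(N)\subseteq\G\subseteq\pm\G$ and there is nothing to prove. The gap is your dismissal of the binary octahedral case as ``ruled out by the assumption on $N$.'' The paper's own proof explicitly includes that case (``the octahedral case is explicit in the proof of the above theorem''), and the corollary is needed precisely there: it is invoked immediately afterwards to obtain \propref{prop4.4}, i.e.\ that the order of $\bar{\rho}(T)$ lies in $\{2,3,4,5\}$ for \emph{every} irreducible finite-image $\rho$. In the octahedral case the order of $\rho(T)$ is $8$, so the divisibility remark alone only shows that the order of $\bar{\rho}(T)$ divides $8$; to exclude the value $8$ one needs $T^4\in\pm\G$, which is exactly the octahedral instance of the corollary with $N=4$. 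The intended reading is therefore that in each of Mason's four cases some $N\in\{3,4,5\}$ works --- equal to the order of $\rho(T)$ in three cases, and equal to $4$ in the octahedral case. Under your reading the corollary adds nothing to the theorem and the deduction of \propref{prop4.4} breaks down for octahedral $\rho$.

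Moreover, the octahedral case is the only one where the conclusion is not a formal consequence of the quoted theorem: $\G(8)\subseteq\G$ does not imply $\G(4)\subseteq\pm\G$, and one must go into Mason's proof, where the image of $\G$ in $\PS$ is identified as $\bar{\G}(4)$ (with quotient $S_4$), to obtain it. This also corrects your reading of the ``$\pm$'': in the three cases you treat it is indeed redundant, as you observe, but in the octahedral case $\G(4)$ is not contained in $\G$ itself (both have index $48$, and $\SL/\G(4)\cong\SL(\BZ/4\BZ)$ is not the binary octahedral group), so the ``$\pm$'' is what makes the statement true rather than mere bookkeeping for the passage to $\PS$.
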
 
\begin{proof} This is clear for the dihedral, tetrahedral and icosahedral cases. The octahedral case is explicit in the proof of the above theorem.
	\end{proof}
As a consequence, since the order of $\bar{\rho}(T)$ divides the order of $\rho(T)$, we have
\begin{prop}\label{prop4.4}
	If ${\rho}$ is an irreducible representation of $\SL$ with finite image and $N$ is the order of $\bar{\rho}(T)$, then $N\in\{2,3,4,5\}$.
\end{prop}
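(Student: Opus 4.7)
My plan is to combine Mason's theorem with an analysis of the central structure of each binary polyhedral group. The key identification is that $\bar{\rho}(T)$ has order $N$ if and only if $N$ is the smallest positive integer with $\rho(T)^N\in\BC^*I_2$.

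First I would note, as already remarked just before the proposition, that twisting $\rho$ by a one-dimensional character of $\SL$ does not change $\bar{\rho}$, so without loss of generality $\rho$ is basic. Then Mason's theorem forces $\mbox{Im}\,\rho$ to be one of the four binary polyhedral groups of orders $12, 24, 48, 120$, with the order of $\rho(T)$ equal to $4, 3, 8, 5$ respectively. To avoid notational collision with the proposition, denote this order by $M$ (the $N$ of Mason's theorem).

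The next key observation is that for each of these binary polyhedral groups $G$, the intersection $G\cap \BC^*I_2$ is exactly the center $\{\pm I_2\}$. Any scalar matrix in $G$ is central in $G$; and since $G$ is conjugate in $\GC$ to the standard realization of a binary polyhedral group in $\mathrm{SU}(2)$, its center is $\{\pm I_2\}$. Hence $N$, the order of $\bar{\rho}(T)$, equals the smallest $k$ such that $\rho(T)^k\in\{\pm I_2\}$. A short case-by-case calculation then gives the result: in the binary dihedral case $(M=4)$, every order-$4$ element squares to the unique involution $-I_2$, so $N=2$; in the tetrahedral $(M=3)$ and icosahedral $(M=5)$ cases, $\rho(T)$ has odd order so the only power of $\rho(T)$ lying in $\{\pm I_2\}$ is $\rho(T)^M=I_2$, giving $N=3$ and $N=5$; and in the octahedral case $(M=8)$, the element $\rho(T)^4$ has order $2$ in $G$ and must therefore equal $-I_2$, while $\rho(T)^2$ has order $4$ and is not scalar, so $N=4$.

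The only mild obstacle is the octahedral case, where one needs that $-I_2$ is the unique element of order $2$ in the binary octahedral group (so that the order-$2$ element $\rho(T)^4$ is forced to be $-I_2$ rather than some other involution). This is a standard feature of the binary polyhedral groups, stemming from the fact that they arise as preimages of polyhedral subgroups under the double cover $\mathrm{SU}(2)\to\mathrm{SO}(3)$: an involution in such a preimage must either project to the identity (giving $\pm I_2$, whence $-I_2$) or to an involution in the polyhedral quotient, but the latter lifts to order-$4$ elements, not order-$2$. With this in hand, the four cases together give $N\in\{2,3,4,5\}$.
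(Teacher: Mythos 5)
Your proof is correct, and while it rests on the same two pillars as the paper's (reduce to a basic representation by twisting, then invoke Mason's classification of the image as one of the four binary polyhedral groups), the final descent from the order of $\rho(T)$ to the order of $\bar{\rho}(T)$ is carried out differently. The paper gets there through its Corollary, i.e.\ the congruence containments $\G(N)\subseteq\pm\ker\rho$ extracted from Mason's proof, combined with the divisibility of the order of $\bar{\rho}(T)$ by that of $\rho(T)$; strictly speaking that route still requires ruling out $N=1$ (which would make $\rho(T)$ scalar and hence, since the normal closure of $T$ is all of $\SL$, make $\rho$ reducible). You instead work entirely inside the image $G=\mathrm{Im}\,\rho$, using that $G\cap\BC^*I_2=\{\pm I_2\}$ together with the element-order structure of the four binary polyhedral groups, and you thereby compute $N$ exactly ($2,3,4,5$ in the dihedral, tetrahedral, octahedral and icosahedral cases respectively) --- a slightly stronger conclusion than mere membership in $\{2,3,4,5\}$, and one in which the case $N=1$ never arises. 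One small point of hygiene: the cleanest justification of $G\cap\BC^*I_2=\{\pm I_2\}$ is Schur's lemma --- irreducibility forces every central element of $G$ to be scalar, and the abstract center of each binary polyhedral group has order two --- rather than the asserted conjugacy of $G$ to the standard model in $\mathrm{SU}(2)$, since an abstract isomorph of a binary polyhedral group in $\GC$ need not be conjugate to that model (e.g.\ after twisting by a character); your double-cover argument identifying $-I_2$ as the unique involution in the octahedral case is, however, exactly right.
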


 \section{The Frobenius method}

In this section we focus on the case where $f$ in \eqref{schP} is a holomorphic automorphic form of weight 4 for the modular group $\SL$. As the space of such forms is one-dimensional, we are looking at the Schwarz differential equation
\begin{equation}\label{the-equ}
\{h,\tau\}\,=\,sE_4(\tau),
\end{equation}
where $s$ is a complex parameter and 
and
\[
E_4(\tau)\,=\,1+240\,\sum_{n\geq 1}\,\sigma_3(n)q^n\,,
\]
where $\sigma_3(n)$ is the sum of the cubes of the positive divisors of $n$ .
 If $s=0$ then $h$ is a linear fraction, and we will assume for the rest of this paper that $s\neq 0$. The  corresponding second degree ODE is given by
\[
y''+\frac{s}{2}\,E_4(\tau)\,y=0.
\]
Write $s=2\pi^2 r^2$, $r\in\BC$ with $\Re(r)\geq 0$, and set $\displaystyle q=e^{2\pi i\tau}$. The ODE becomes
\[
\frac{d^2 y}{{dq}^2}+\frac{1}{q}\frac{dy}{dq}-\frac{r^2}{4}\frac{E_4(q)}{q^2}\,y=0
\]
in the punctured disc $\{0<|q|<1\}$.
As $E_4(q)=1+\mbox{O}(q)$,  The ODE is a Fuchsian differential equation with a regular singular point at $q=0$. We apply the Frobenius method to determine the shape of the solutions.

The indicial equation for the ODE is given by
\[
x^2-\frac{r^2}{4}=0,
\]
and we set $x_1=r/2$ and $x_2=-r/2$ so that $\Re(x_1)\geq\Re(x_2)$. We always have a solution given by
\begin{equation}\label{sol-y1}
y_1(q)=q^{r/2}\,\sum_{n=0}^{\infty}\,c_n\,q^n\,,\ \mbox{ with } c_0\neq 0.
\end{equation}
The second solution depends on $x_1-x_2$.
If $x_1-x_2=r$ is not an integer, a linearly independent solution with $y_1(x)$ is given by
\begin{equation}\label{sol-y2}
y_2(q)=q^{-r/2}\,\sum_{n=0}^{\infty}\,c^*_n\,q^n\,,\ \mbox{ with } c^*_0\neq 0.
\end{equation}
In this case a solution to \eqref{the-equ} is given by
\[
h(\tau)=\frac{y_2(q)}{y_1(q)}=q^{-r}\,\sum_{n=0}^{\infty}\,a_nq^n\,,\ \mbox{ with } a_0\neq 0.
\]
If $x_2-x_1$ is an integer (which must be positive under our assumption), then a second solution is given by
\begin{equation}\label{sol-y1-log}
y_2(q)=k\log(q)y_1(q)+q^{-r/2}\,\sum_{n=0}^{\infty}\,C_n\,q^n\,,\ \mbox{ with } C_0\neq 0\,,\ k\in\BC.
\end{equation}
yielding a solution to \eqref{the-equ} of the form
\[
h(\tau)=k\log(q) + q^{-r}\,\sum_{n=0}^{\infty}\,b_nq^n\,,\ \mbox{ with } b_0\neq 0.
\]
The branch of the log is chosen so that $\log(q)=2\pi i \tau$ when $\tau\in\fH$. 
Since the set of solutions to \eqref{the-equ} is invariant under linear fractional transformations, in particular under inversion and under multiplication by a scalar, we deduce the following
\begin{prop} \label{prop5.1}
	We have
		\begin{enumerate}
			\item If $r\notin\BZ$, there is a solution to \eqref{the-equ} of the form
			\begin{equation}\label{sol-mero}
			h(\tau)=q^{r}\,\sum_{n=0}^{\infty}\,a_nq^n\,,\ \mbox{ with } a_0\neq 0.
			\end{equation}
			\item If $r\in\BZ$, then there is a solution to \eqref{the-equ} of the form
			\begin{equation}\label{sol-log}
			h(\tau)=\tau\, +\, q^{-r}\,\sum_{n=0}^{\infty}\,b_nq^n\,,\ \mbox{ with } b_0\neq 0.
			\end{equation}
		\end{enumerate}
\end{prop}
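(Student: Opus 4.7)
The plan is to extract both statements directly from the Frobenius analysis just laid out, combined with the ratio construction of Theorem 3.2 that converts ODE solutions into Schwarzian solutions. The case split in the proposition matches the classical Frobenius dichotomy based on whether the difference of indicial roots $x_1 - x_2 = r$ is a positive integer or not; under the normalization $\Re(r) \geq 0$ together with $s \neq 0$, the integer case forces $r \geq 1$.

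For (1), with $r \notin \BZ$, I would take the two Frobenius solutions $y_1$ and $y_2$ from \eqref{sol-y1} and \eqref{sol-y2}. By Theorem 3.2(1) the ratio $h = y_1/y_2$ solves $\{h,\tau\} = sE_4$, and dividing the two convergent power series yields $h = q^r \sum_{n\geq 0} a_n q^n$ with $a_0 = c_0/c_0^* \neq 0$, which is exactly \eqref{sol-mero}. Any other solution differs by a linear fractional transformation, but this particular representative is already in the required form.

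For (2), with $r$ a positive integer, I would use $y_1$ together with the (possibly) logarithmic second solution $y_2$ of \eqref{sol-y1-log}. The ratio is
\[
\frac{y_2}{y_1} \,=\, k\log(q) + q^{-r}\sum_{n\geq 0} D_n q^n
\]
with $D_0 \neq 0$. Substituting $\log(q) = 2\pi i\tau$ and then rescaling by the scalar linear fractional transformation $h \mapsto h/(2\pi i k)$, which is permitted by projective invariance \eqref{sd1} and hence stays within the solution class of the Schwarzian equation, produces the normalized form $\tau + q^{-r}\sum b_n q^n$ of \eqref{sol-log} with $b_0 = D_0/(2\pi i k) \neq 0$.

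The main obstacle is that the final step of case (2) tacitly requires $k \neq 0$; otherwise $y_2/y_1$ would be a pure Laurent series in $q$ and no linear fractional transformation of it could introduce a $\tau$-term. I would address this by inspecting the Frobenius recursion at the resonant index $n = r$: substituting a formal $q^{-r/2}\sum C_n q^n$ into $y'' + (s/2)E_4 y = 0$ and using $E_4 = 1 + 240q + \cdots$, the step $n = r$ produces an inhomogeneous equation whose right-hand side, built from the previously determined $C_0,\ldots,C_{r-1}$, is nonzero; this forces the introduction of the $\log$ correction and hence $k \neq 0$. Equivalently, one can argue monodromically: the local monodromy of the ODE around $q=0$ must realize $\rho(T)$ up to sign, and integer $r$ corresponds to unipotent (non-semisimple parabolic) monodromy, which is exactly the signature of the logarithmic subcase of Frobenius.
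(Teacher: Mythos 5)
Your core construction is exactly the paper's: the proposition is read off from the Frobenius expansions \eqref{sol-y1}--\eqref{sol-y1-log} by taking the quotient of the two basis solutions and then using the fact that the solution set of \eqref{the-equ} is closed under inversion and scalar multiplication. Case (1) is complete as you state it. You are also right to single out the hidden hypothesis in case (2): the normalized form $\tau+q^{-r}\sum b_nq^n$ exists only if the logarithmic coefficient $k$ in \eqref{sol-y1-log} is nonzero, and the paper passes over this point in silence (it is, however, essential: Proposition \ref{prop5.2} and the dichotomy announced in the introduction both rest on it).

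The gap is that neither of your two proposed justifications actually establishes $k\neq 0$. The recursion argument only asserts that the obstruction at the resonant index $n=r$, namely $\sum_{j=1}^{r}\sigma_3(j)C_{r-j}$ with the $C_m$ ($1\le m\le r-1$) determined by $m(m-r)C_m=60r^2\sum_{j=1}^m\sigma_3(j)C_{m-j}$, is nonzero; this is a one-line check for $r=1$ (it is $-60C_0$), but for general $r\in\BZ_{>0}$ the denominators $m(m-r)$ change sign and no cancellation-free structure is visible, so ``is nonzero'' is precisely the claim to be proved, not a proof. The monodromy remark is circular: the local monodromy of the ODE at $q=0$ is non-semisimple \emph{if and only if} the log term is genuinely present, so invoking unipotent non-semisimple monodromy presupposes $k\neq0$. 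A non-circular argument is available from the global structure already developed in Section 3: if $k=0$, then every solution $h$ of \eqref{the-equ} is meromorphic in $q$, hence invariant under $T$, so $\bar\rho(T)=1$ in $\PG$; since $\PS$ is generated by $S,T$ with $S^2=(ST)^3=1$, this forces $\bar\rho(S)$ to have order dividing both $2$ and $3$, i.e.\ $\bar\rho$ is trivial and $h$ is a nonconstant $\PS$-invariant function meromorphic at $\infty$, hence a rational function of $j$. But such a function cannot be locally univalent at the elliptic point $\tau=i$ (it is at least $2$-to-$1$ there), contradicting the fact, established in the proof of the theorem in Section 3, that any solution of $\{h,\tau\}=sE_4$ with $E_4$ holomorphic has nowhere-vanishing derivative and only simple poles. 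Hence $k\neq0$, and your rescaling by $1/(2\pi i k)$ goes through.
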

In these two cases, we keep in mind that any other solution is a linear fraction of the given one. The following is deduced easily from the above considerations.
\begin{prop}\label{prop5.2}
	Let $h$ be a solution to \eqref{the-equ} with $s=2\pi^2r^2$, and let $\rho$ be the 2-dimensional representation of $\SL$ attached to $h$, then  $T^m\in\ker\rho$ for some non-zero integer $m$ if and only if $r\in\BQ\setminus\BZ$. 
\end{prop}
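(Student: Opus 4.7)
The plan is to use the explicit shapes of solutions provided by \propref{prop5.1} and translate the condition $T^{m}\in\ker\rho$ into an elementary functional equation for $h$. Since the equivariance \eqref{rho-equiv} uses the linear fractional action, membership $T^{m}\in\ker\rho$ is equivalent to $\rho(T^{m})$ acting trivially as a Möbius transformation, i.e.\ to $h(\tau+m)=h(\tau)$ on $\fH$. By \propref{sigma}, replacing $h$ by another solution conjugates $\rho$, so the existence of such an $m$ depends only on the equation \eqref{the-equ}, not on the choice of solution; hence it is legitimate to compute with the specific solution supplied by \propref{prop5.1}.

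First I would handle the case $r\notin\BZ$. Using the solution $h(\tau)=q^{r}\sum_{n\geq 0}a_{n}q^{n}$ with $a_{0}\neq 0$, a direct substitution gives
\[
h(\tau+m)\,=\,e^{2\pi i r m}\,q^{r}\sum_{n\geq 0}a_{n}q^{n}\,=\,e^{2\pi i r m}\,h(\tau).
\]
As a Möbius transformation, multiplication by $e^{2\pi i r m}$ is the identity exactly when $e^{2\pi i r m}=1$, that is, $rm\in\BZ$. Thus $T^{m}\in\ker\rho$ for some nonzero integer $m$ if and only if $r\in\BQ$. Combined with the standing assumption $r\notin\BZ$, this gives the desired equivalence on this branch.

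Next I would handle the case $r\in\BZ$. Using the solution $h(\tau)=\tau+q^{-r}\sum_{n\geq 0}b_{n}q^{n}$ with $b_{0}\neq 0$, and observing that $rm\in\BZ$ automatically, the substitution yields
\[
h(\tau+m)\,=\,\tau+m+q^{-r}\sum_{n\geq 0}b_{n}q^{n}\,=\,h(\tau)+m.
\]
The Möbius transformation $z\mapsto z+m$ is the identity only for $m=0$, so no nonzero $m$ has $T^{m}\in\ker\rho$. Since in this regime $r\in\BZ$, this is consistent with the claim that $r\in\BQ\setminus\BZ$ is necessary. Combining both cases establishes the proposition.

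The only point that needs care is the independence of the conclusion from the choice of solution, which is ensured by the conjugation relation of \propref{sigma} (kernels are invariant under conjugation). Otherwise the argument is a direct $q$-expansion computation, and no obstacle beyond bookkeeping the two Frobenius shapes should arise.
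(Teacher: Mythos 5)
Your argument is correct and is exactly the computation the paper has in mind: the paper gives no written proof (it says the proposition ``is deduced easily from the above considerations''), and those considerations are precisely your $q$-expansion check that $h(\tau+m)=e^{2\pi i rm}h(\tau)$ in the non-integral case and $h(\tau+m)=h(\tau)+m$ in the integral case, together with the conjugation-invariance of the kernel from \propref{sigma}. No discrepancy with the paper's approach.
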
 
We end this section by providing the recurrence relations among the coefficients of the solutions $y_1(q)$ above. Write
\[
-\frac{r^2}{4}E_4=\sum_{n=0}^{\infty}\,\alpha_n\,q^n\,,\ \ \alpha_0=-r^2/4.
\]
It is easy to see that the coefficients $c_i$ of $q^{r+i}$ in $y_1(q)$ satisfy 
\[
[(r+s)^2+\alpha_0]c_s\,+\, \sum_{i=0}^{s-1}\alpha_{s-i}c_i=0\,,\ \ c_0\mbox{ being indeterminate}.
\]
Similar relations hold for the coefficients of $y_2(q)$ in the absence of the logarithmic term.
\section{Modular solutions: the degree 1 case} \label{unramified}
In this section, we  focus on  the solutions $h$ to \eqref{the-equ} whose attached representation $\rho$ is such that $\Gamma=\ker \rho$ is a finite index subgroup of $\SL$. In other words, $h$ is a modular function for $\Gamma$. Since $\G $ is normal in $\SL$, all its cusps have the same width $m$ which is defined as the smallest positive integer $m$ such that the matrix $\displaystyle \binom{1\ \ m}{0\ \  1}\in\G$ in the case of the cusp at infinity.  We will refer to the integer $m$ as the level of $\G$, even if $\G $ is not a congruence subgroup (This is Wohlfahrt's definition of the level as being the least common multiple of all cusp widths \cite{wohl} which coincides with Klein's definition of the level in the case of congruence subgroups). Moreover,
$q=exp(2\pi i\tau/m)$ is the local uniformizer at $\infty$ for the group $\G$. According to \propref{prop5.2}, we must have $s=2\pi^2r^2$ where $r\in\BQ\setminus\BZ$. If we write $r$ in lowest terms, then necessarily 
$r=n/m$ for a positive integer $n$. Thus we have
\begin{prop}
	If $h$ is a solution to \eqref{the-equ} and $\Gamma=\ker \rho$ is a finite index subgroup of $\SL$ then
	\begin{equation}
	s=2\pi^2\left(\frac{n}{m}\right)^2\,,
	\end{equation}
	where $n\in\BZ_{>0}$ and $m\in\BZ_{>0}$ is the level of $\Gamma$.
\end{prop}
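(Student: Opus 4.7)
The plan is to combine the explicit shape of the solutions given by Proposition~\ref{prop5.1} with the fact that $\Gamma=\ker\rho$, being a kernel, is normal in $\SL$; hence all its cusps have the same width, which by definition is the level $m$, and $m$ is the smallest positive integer with $T^m\in\Gamma$.

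Write $s=2\pi^2 r^2$ with $\Re(r)\geq 0$ and $r\neq 0$. Since $T^m\in\ker\rho$ for a nonzero integer $m$, Proposition~\ref{prop5.2} already forces $r\in\BQ\setminus\BZ$. In particular $r\notin\BZ$, so by Proposition~\ref{prop5.1} I will fix the canonical solution $h(\tau)=q^{r}\sum_{n\geq 0} a_n q^n$ with $a_0\neq 0$; any other solution has the same invariance group $\Gamma$ by Proposition~\ref{sigma}.

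A direct computation yields $h(\tau+k)=e^{2\pi i r k}\,h(\tau)$ for every $k\in\BZ$, so $\rho(T^k)$ acts on $h$ by the linear fractional transformation $z\mapsto e^{2\pi i r k}z$. This transformation is the identity precisely when $rk\in\BZ$, whence
\[
T^k\in\Gamma\ \Longleftrightarrow\ rk\in\BZ.
\]
Writing $r=n/m'$ in lowest terms with $n,m'\in\BZ_{>0}$ (valid since $r\in\BQ$ and $r>0$), the positive integers $k$ satisfying $rk\in\BZ$ are exactly the positive multiples of $m'$. Minimality of the level then forces $m=m'$, whence $s=2\pi^2(n/m)^2$ with $n\in\BZ_{>0}$, as required.

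The only subtle point is the exclusion of the case $r\in\BZ$; this is the content borrowed from Proposition~\ref{prop5.2}, and it reflects the fact that for integer $r$ the solution has the logarithmic form $h(\tau)=\tau+q^{-r}\sum b_n q^n$, under which $\rho(T^k)$ acts as the nontrivial translation $z\mapsto z+k$ for every $k\neq 0$, contradicting the existence of any positive power of $T$ in $\Gamma$.
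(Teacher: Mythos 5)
Your proposal is correct and follows essentially the same route as the paper: invoke Proposition~\ref{prop5.2} to get $r\in\BQ\setminus\BZ$ from $T^m\in\ker\rho$, and then identify the denominator of $r$ (in lowest terms) with the cusp width at $\infty$, which is the level since $\Gamma$ is normal. Your computation of the action of $\rho(T^k)$ on the Frobenius solution, showing $T^k\in\Gamma$ exactly when $rk\in\BZ$, makes explicit the minimality step that the paper leaves as a one-line assertion, but it is the same underlying argument.
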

Another way to look at this statement, which will be useful later, is that  $h$, as a modular function,
has a $q-$expansion meromorphic at $\infty$  which one can write from \propref{prop5.1}  as 
	\[
	h(z)=q^n+\sum_{i=n+1}^{\infty}\,a_iq^i\,,\ \ q=e^{\frac{2\pi i\tau}{m}}\,.
	\]
	In fact, \propref{prop5.1} says more: that $a_i=0$ if $i$ is not of the form $m+jn$, $j\geq 0$. Furthermore,
	an easy calculation using \eqref{sd2} with $w=q$ yields
	\[
	\{h,z\}=2\pi^2\left(\frac{n}{m}\right)^2\left[
	1+(-12a_3+12a_2^2)q^2+O(q^3)\right].
	\]
	Since $E_4(z)=1+240\;q^m+O(q^{2m})$, we deduce that $\displaystyle s=2\pi^2\left(\frac{n}{m}\right)^2$.
	
For the rest of this section, we will assume that $n=1$ which is equivalent to say that  $h$   has a simple zero at $\infty$.

\begin{thm}\label{genus0}
		If $h$ is a solution to \eqref{the-equ} having a simple zero at $\infty$  and $\Gamma=\ker \rho$ is a finite index subgroup of $\SL$ then $\G$ has genus zero.
\end{thm}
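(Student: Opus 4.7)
The plan is to realize $h$ as an everywhere unramified covering $X(\G)\to\PC$ and then invoke Riemann--Hurwitz. First, I would establish that $\G$ is torsion-free: because $f=sE_4$ is holomorphic, the discussion after \propref{prop5.2} (and already used in proving the earlier theorem in \secref{mde}) gives $h'\neq 0$ everywhere on $\fH$, so $h$ is locally univalent. If some $\gamma\in\G$ of finite order $>1$ fixed a point $\tau_0\in\fH$, the invariance $h(\gamma\tau)=h(\tau)$ valid in a neighborhood of $\tau_0$ would contradict local injectivity of $h$ at $\tau_0$. Hence $\G$ has no elliptic elements, and $X(\G)$ has no elliptic points.

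Next, since $\G=\ker\rho$ is normal and of finite index in $\SL$, $h$ descends to a non-constant meromorphic map $\bar h\colon X(\G)\to\PC$ of some finite degree $d$. I would then show that $\bar h$ is unramified at every point of $X(\G)$. On $\fH$, local univalence of $h$ combined with the absence of elliptic points means $\bar h$ is unramified at every non-cusp point. At the cusp $\infty$, the local uniformizer is $q=e^{2\pi i\tau/m}$, and by hypothesis ($n=1$) the expansion from \propref{prop5.1} reads $h=q+O(q^2)$, so $\bar h$ has a simple zero at $\infty$ and is unramified there.

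The main technical step is unramification at the remaining cusps. Since $\G\triangleleft\SL$, every cusp of $\G$ has the form $c=\g\cdot\infty$ for some $\g\in\SL$. Writing $\tau=\g\tau'$ and using $h(\g\tau')=\rho(\g)\cdot h(\tau')$ with $h(\tau')=q_c+O(q_c^2)$ (where $q_c=e^{2\pi i\tau'/m}$ is the local uniformizer at $c$), a direct expansion of the linear fractional action $\rho(\g)=\binom{A\ B}{C\ D}$ yields
\[
h(\tau)=
\begin{cases}
\dfrac{B}{D}+\dfrac{AD-BC}{D^2}\,q_c+O(q_c^2), & D\neq 0,\\[4pt]
\dfrac{B}{C\,q_c}+O(1), & D=0,
\end{cases}
\]
and in either case, using $\det\rho(\g)=AD-BC\neq 0$, $\bar h$ takes a value (possibly $\infty$) with multiplicity one at $c$. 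Thus $\bar h$ is unramified at every cusp.

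Finally, Riemann--Hurwitz applied to the unramified cover $\bar h\colon X(\G)\to\PC$ gives
\[
2g(X(\G))-2=d\bigl(2g(\PC)-2\bigr)=-2d,
\]
so $g(X(\G))=1-d$. Since $g(X(\G))\geq 0$ and $d\geq 1$, we conclude $g(X(\G))=0$ (with $d=1$ as a bonus, consistent with the section title). I expect the computational check at the non-infinity cusps to be the only delicate step, as one must keep track of the two cases $D\neq 0$ and $D=0$ and see that neither introduces ramification.
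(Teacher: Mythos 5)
Your proposal is correct and follows essentially the same route as the paper: exhibit $h$ as a nowhere-ramified covering $X(\G)\to\PC$ (checking the cusps via normality of $\G$ and the equivariance $h\circ\g=\rho(\g)\cdot h$) and then apply Riemann--Hurwitz to get $2g-2=-2d$, forcing $g=0$ and $d=1$. The only cosmetic differences are that you fold in the torsion-freeness of $\G$ (which the paper proves as a separate proposition immediately afterwards) and replace the paper's one-line remark that $\rho(\g)$ is an automorphism of the Riemann sphere by an explicit expansion of the M\"obius action.
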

\begin{proof}
	Recall that a non-constant holomorphic mapping between two compact Riemann surfaces $f:S'\longrightarrow S$ is necessarily surjective and define a covering of finite degree $d$. If $g$ and $g'$ are the genera of $S$ and $S'$ respectively, then we have the Riemann-Hurwitz formula \cite{shimura}
	\[
	2g'-2=d(2g-2) +\sum_{z\in S'}\,(e_z-1),
	\]
	where $e_z$  is the ramification index of the covering at $z$. Now,
	suppose that $\Gamma=\ker \rho$ is a finite index subgroup of $\SL$. As $\{h,z\}$ is holomorphic on $\fH$, it follows that $h$ takes only simple values and simple poles on $\fH$. Moreover, $h$ takes either a simple value or has a simple pole at any  cusp $c$ of $\G$. Indeed,  let $\gamma\in\SL$ such that $\gamma\cdot \infty=c$ and set $g=h\circ\gamma$. The meromorphic behavior of $h(\tau)$ at $c$ is given by the meromorphic behavior of $g$ at $\infty$. Notice that since $\G $ is normal in $\SL$, $g$ is also invariant under $\G$. Furthermore,
	we have $g(\tau)=\rho(\gamma) h(\tau)$. Therefore, since $h$ takes a simple zero at $\infty$, $g(\tau)$ either takes a simple value  or has a simple pole at $\infty$ as $\rho(\gamma)$ is an automorphism of the Riemann sphere.  If $\fH^*=\fH\cup\BQ\cup\{\infty\}$ and $X(\G)=\G\backslash \fH^*$, then we have a covering of Riemann surfaces $h:X(\G)\longrightarrow {\mathbb P}_1(\BC)$ that is nowhere ramified. If $d$ is the degree of this covering, the Riemann-Hurwitz formula yields $2g'-2=-2d$ where $g'$ is the genus of $X(\G )$. It follows that $d=1$ and $g'=0$.
	\end{proof}
\begin{remark}
	The same argument as in the above proof shows that if $\Gamma=\ker \rho$ has finite index in $\SL$, then $h$ has the same ramification index at all cusps, even when this index is not 1.
\end{remark}
\begin{prop}
	If  $h$ is a solution to \eqref{the-equ} then $\Gamma=\ker \rho$  is torsion-free, that is, it has no elliptic elements.
\end{prop}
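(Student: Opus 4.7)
The plan is to leverage the local univalence of any solution $h$ of \eqref{the-equ} against the existence of a nontrivial local rotation fixing a point of $\fH$.

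First, since $sE_4$ is holomorphic on $\fH$, the analysis in \secref{mde} shows that $h'$ is nowhere vanishing on $\fH$ and that any pole of $h$ in $\fH$ is simple. Consequently $h$ is locally injective when viewed as a meromorphic map $\fH\to\PC$: at an ordinary point this is immediate from $h'\neq 0$, while at a simple pole $\tau_0$ one applies the same observation to $1/h$, which is holomorphic at $\tau_0$ with nonzero derivative.

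Now assume for contradiction that $\Gamma=\ker\rho$ contains an elliptic element $\gamma$. Then $\gamma$ has finite order in $\SL$, is distinct from $\pm I_2$, and possesses a unique fixed point $\tau_0\in\fH$. The image of $\gamma$ in $\PS$ has order $k\in\{2,3\}$, and a Cayley-type change of variable $w=(\tau-\tau_0)/(\tau-\bar{\tau}_0)$ identifies a neighborhood of $\tau_0$ with a disc about $0$ in which $\gamma$ acts as multiplication by a primitive $k$-th root of unity $\zeta$.

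Because $\gamma\in\ker\rho$, the $\rho$-equivariance collapses to plain invariance $h(\gamma\tau)=h(\tau)$. In the disc coordinate this reads $\tilde h(\zeta w)=\tilde h(w)$, so $\tilde h$ takes the same value on the $k\geq 2$ distinct points of every nontrivial $\zeta$-orbit around $0$, contradicting the local injectivity of $h$ at $\tau_0$ established in the first step. The only genuine subtlety is handling the case where $\tau_0$ happens to be a pole of $h$; this is dispatched by replacing $h$ with $1/h$, which solves the same Schwarzian equation and by \propref{sigma} is equivariant for a conjugate representation with the same kernel $\Gamma$, so the same argument applies.
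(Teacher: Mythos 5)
Your proof is correct and rests on the same key fact as the paper's: the holomorphy of $sE_4$ forces $h$ to be locally univalent on $\fH$, which is incompatible with $h$ being invariant under a nontrivial finite-order stabilizer of an interior point. The paper packages this as a one-line computation---differentiating $h(\gamma\tau)=h(\tau)$ and evaluating at the fixed point gives $(c\tau_0+d)^2=1$, forcing $\tau_0$ to be real---whereas you phrase it as a local-injectivity-versus-rotation contradiction and, usefully, treat explicitly the case where the fixed point is a pole of $h$, a point the paper leaves implicit.
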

\begin{proof} Suppose that $\G $ has an elliptic element 
$\displaystyle \gamma=\mat$ and $\tau_0\in\fH$ is an elliptic fixed point by 
	$\gamma$.  For $\tau \in\fH$, differentiating
	$h(\gamma\cdot\tau)=h(\tau)$ yields $h'(\gamma\cdot\tau)=(c\tau+d)^2h'(\tau)$. This gives, for $\tau=\tau_0$, $h'(\tau_0)=(c\tau_0+d)^2h'(\tau_0)$. As $h'(\tau_0)\neq 0$, we must have $(c\tau_0+d)^2=1$ which implies that $\tau_0$ is real, which is impossible. 
	\end{proof}

It turns out that there are only finitely many possibilities for such subgroups.
\begin{prop}\label{4.4}	If $h$ is a solution to \eqref{the-equ} having a simple zero at $\infty$ and $\Gamma=\ker \rho$ is a finite index subgroup of $\SL$ of level $m$, then $2\leq m\leq 5$.
		\end{prop}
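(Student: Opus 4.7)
The plan is to apply Gauss--Bonnet to $\bar{\G}=\pi(\G)\subset \PS$, using that by the previous two propositions it is torsion free, normal in $\PS$, and of genus zero, with every cusp having width $m$.

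First, I would translate from $\SL$ to $\PS$. Since $\G$ is torsion free, $-I \notin \G$, so $\pi$ restricts to an isomorphism $\G \xrightarrow{\sim} \bar{\G}$; in particular $\bar{\G}$ is torsion free and has the same level $m$. Writing $\nu_\infty$ for the number of inequivalent cusps, and using the standard fact that the widths of the cusps of a finite-index subgroup of $\PS$ sum to its index (one proof is via the double-coset decomposition of $\PS$ along the stabilizer $\langle \bar{T}\rangle$ of $\infty$), I get
\[
[\PS : \bar{\G}] \;=\; m\,\nu_\infty .
\]

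Next, Gauss--Bonnet applied to $\bar{\G}$ in the torsion-free, genus-zero case reads
\[
\frac{[\PS : \bar{\G}]}{6} \;=\; 2g - 2 + \nu_\infty \;=\; \nu_\infty - 2 .
\]
Eliminating the index between the two displayed equations produces the single Diophantine condition
\[
\nu_\infty\,(6 - m) \;=\; 12 .
\]
Since $\nu_\infty$ and $m$ are positive integers, $6 - m$ must be a positive divisor of $12$. The values $6 - m = 6, 12$ force $m \le 0$ and are excluded; the value $6 - m = 5$ is excluded because $5 \nmid 12$. Hence $6 - m \in \{1,2,3,4\}$, i.e.\ $m \in \{2, 3, 4, 5\}$.

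I expect the main obstacle to be the careful bookkeeping between $\SL$ and $\PS$, so that the cusp-width/index identity and Gauss--Bonnet are applied in the right group, and the verification that torsion-freeness passes from $\G$ to $\bar{\G}$ (which rests on $-I \notin \G$). As a consistency check, the four allowed pairs $(m, \nu_\infty) = (2,3),(3,4),(4,6),(5,12)$ match in orders exactly the four finite-image irreducible cases underlying \propref{prop4.4}, which is reassuring given that the reducible case is to be ruled out in the later chapter.
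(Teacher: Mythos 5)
Your proof is correct and takes essentially the same route as the paper: Riemann--Hurwitz (equivalently Gauss--Bonnet) for the torsion-free, genus-zero covering $X(\G)\longrightarrow X(1)$, combined with the normality identity $\mu=m\,\nu_{\infty}$, yields $(6-m)\nu_{\infty}=12$ and hence $2\leq m\leq 5$. The only cosmetic difference is that you carry out the bookkeeping in $\PS$ and phrase the area formula as Gauss--Bonnet, whereas the paper states it as the Riemann--Hurwitz genus formula for the covering of modular curves.
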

\begin{proof}
	If $\G$ is a finite index subgroup of $\SL$, we  have  the natural covering of compact Riemann surfaces 
	$X(\G )\longrightarrow X(1)$ where $X(1)=X(\SL)$, for which the Riemann-Hurwitz formula reads \cite{rankin}
	\begin{equation}\label{r-h}
	g\,=\, 1+\frac{\mu}{12}-\frac{e_2}{4}-\frac{e_3}{3}-\frac{\nu_{\infty}}{2},
	\end{equation}
	where $g$ is the genus of $X(\G)$, $\mu=[\PS:\bar{\G}]$,  $e_k$, $k\in\{2,3\}$, is the number of inequivalent elliptic fixed points of order $k$, and $\nu_{\infty}$ is the parabolic class number, that is the number of $\G-$inequivalent cusps. In our case, $\G$ is genus 0 and torsion-free so that the Riemann-Hurwitz becomes
	\begin{equation}\label{rh0}
	\mu=6(\nu_{\infty}-2).
	\end{equation}
	In the meantime, the degree of the above covering is $\mu$ and it is also the sum of the ramification indices of the $\nu_{\infty}$ cusps above $\infty$. These ramification indices are all equal to $m$ as $\G $ is normal in $\SL$. In other words, we have $\mu=m\nu_{\infty}$. Hence we have $m\nu_{\infty}=6(\nu_{\infty}-2)$ and so $m<6$ and $(6-m)\nu_{\infty}=12$. We conclude that necessarily $2\leq m\leq 5$.
	\end{proof}
Define, for  $n\in\BZ$, the group 
\[
\Delta(n)=\langle \gamma T^n\gamma^{-1}\,,\ \gamma\in \SL\rangle.
\]
If $m$ is the (Wohlfahrt) level of a finite index normal subgroup of $\SL$, then $\Delta(m)$ is a subgroup of $\G $ and we have the covering $X(\Delta(m))\longrightarrow X(\G)$. This is true in particular for $\G=\G(m)$, the principal congruence subgroup of level $m$. One could argue that this covering is, in fact, the universal covering of $X(\G(m))$ \cite{wohl}. However,, when  $X(\Gamma(m))$ has a positive genus, its universal covering is infinite-sheeted and this occurs if and only if $m\geq 6$. To be more precise, we have
\begin{prop}\label{4.5}\cite{rankin} We have  $\Delta(m)=\G (m)$ if $1\leq m\leq 5$ , and for $6\leq m$, we have  $[\G(m):\Delta(m)]=\infty$.
\end{prop}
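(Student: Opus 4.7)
The containment $\Delta(m) \subseteq \Gamma(m)$ is immediate from the normality of $\Gamma(m)$ in $\SL$ together with $T^m \in \Gamma(m)$. My plan is to control the reverse containment, and the infinite-index assertion, by passing to $\PS$, where the quotient becomes the classical $(2,3,m)$ von Dyck triangle group.

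Using $\PS = \langle \bar S, \bar T \mid \bar S^2 = (\bar S \bar T)^3 = 1\rangle$, the normal closure of $\bar T^m$ gives
\[
\PS/\bar\Delta(m) \;=\; \langle \bar S, \bar T \mid \bar S^2 = \bar T^m = (\bar S \bar T)^3 = 1\rangle,
\]
the $(2,3,m)$ von Dyck group. By the spherical/Euclidean/hyperbolic trichotomy, this group is finite of orders $6,12,24,60$ for $m=2,3,4,5$ (dihedral, tetrahedral, octahedral, icosahedral) and infinite for $m\geq 6$. For $2\leq m\leq 5$, the reduction $\PS \twoheadrightarrow \mbox{PSL}_2(\BZ/m\BZ) = \PS/\bar\Gamma(m)$ annihilates $\bar T^m$, so it factors through $\PS/\bar\Delta(m)$; the resulting surjection is between finite groups of equal order ($|\mbox{PSL}_2(\BZ/m\BZ)|=6,12,24,60$ respectively), hence must be an isomorphism, whence $\bar\Delta(m)=\bar\Gamma(m)$. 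For $m=3,4,5$ one has $-I_2\notin\Gamma(m)$, so $\pi$ is injective on both $\Gamma(m)$ and $\Delta(m)$, and $\Delta(m)=\Gamma(m)$ follows at once. The case $m=1$ is immediate, since $\Delta(1)$ contains both $T$ and $STS^{-1}$, which already generate $\SL$.

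The main obstacle is the case $m=2$, where $-I_2\in\Gamma(2)$ and one must check separately that $-I_2\in\Delta(2)$. I would argue directly in the quotient $\SL/\Delta(2)$, using the presentation $\SL=\langle S,U\mid S^4=1,\; S^2=U^3\rangle$ with $U=ST$. The relation $T^2=1$, equivalently $(S^{-1}U)^2=1$, is equivalent to $US^{-1}U=S$, from which a brief manipulation yields $SUS^{-1}=U^2$. Iterating conjugation by $S$ gives $U = S^4 U S^{-4} = U^{2^4} = U^{16}$, so $U^{15}=1$; combined with $U^6=S^4=1$ this forces $U^3=1$, whence $S^2=U^3=1$ in $\SL/\Delta(2)$. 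Therefore $-I_2=S^2\in\Delta(2)$, and $\Delta(2)=\Gamma(2)$ follows.

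For $m\geq 6$, the triangle group $\PS/\bar\Delta(m)$ is infinite while $[\PS:\bar\Gamma(m)]=|\mbox{PSL}_2(\BZ/m\BZ)|<\infty$, giving $[\bar\Gamma(m):\bar\Delta(m)]=\infty$. Since the induced map $\Gamma(m)/\Delta(m)\to\bar\Gamma(m)/\bar\Delta(m)$ has kernel of order at most $2$, we conclude that $[\Gamma(m):\Delta(m)]=\infty$, as claimed.
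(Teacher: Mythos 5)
Your proposal is correct and complete. Note that the paper gives no argument for this proposition at all---it is stated with a bare citation to Rankin---so any comparison is with the standard literature proof rather than with something in the text; your route is essentially that standard proof, carried out in full. The key identification of $\PS/\bar\Delta(m)$ with the $(2,3,m)$ von Dyck group, the order count $12m/(6-m)=6,12,24,60$ matching $|\mbox{PSL}_2(\BZ/m\BZ)|$ for $m=2,3,4,5$, and the spherical/Euclidean/hyperbolic trichotomy for $m\geq 6$ are all right, and the surjection-between-equal-finite-orders argument correctly yields $\bar\Delta(m)=\bar\Gamma(m)$. You also handle the two points that are most often glossed over: the descent from $\PS$ back to $\SL$ (trivial for $m=3,4,5$ where $-I_2\notin\Gamma(m)$, and requiring the explicit verification that $-I_2\in\Delta(2)$, which your computation $SUS^{-1}=U^2\Rightarrow U^{15}=1$, combined with $U^6=1$, settles correctly), and the passage from $[\bar\Gamma(m):\bar\Delta(m)]=\infty$ to $[\Gamma(m):\Delta(m)]=\infty$ via the surjection $\Gamma(m)/\Delta(m)\twoheadrightarrow\bar\Gamma(m)/\bar\Delta(m)$. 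The only cosmetic remark is that for the infinite-index direction the bound on the kernel of that surjection is not even needed---surjectivity alone suffices. What your write-up buys over the paper's citation is a self-contained proof; what it costs is length, which is presumably why the authors chose to cite Rankin instead.
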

Finally, we have
\begin{thm}
		If $h$ is a solution to \eqref{the-equ} such that $\ker \rho$ is a finite index subgroup of $\SL$, then $\ker \rho$ is one of the four groups $\G (m)$, $2\leq m\leq 5$.
\end{thm}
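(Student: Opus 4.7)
The plan is to leverage the structural constraints on $\G = \ker\rho$ that have already been established in this section and to invoke \propref{4.5} to identify $\G$ with $\G(m)$. All of the heavy geometric machinery (Riemann--Hurwitz, torsion-freeness, genus zero) has been done; what remains is a clean index comparison.

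First I would gather the hypotheses in force: since this is the degree~1 section, the assumption $n=1$ is in effect, so by \propref{4.4} the level $m$ of $\G$ lies in $\{2,3,4,5\}$. The preceding propositions and \thmref{genus0} give that $\G$ is torsion-free, normal in $\SL$, and of genus zero, with the index computed from \eqref{rh0} as
\[
\mu \;=\; [\PS:\bar{\G}] \;=\; m\,\nu_{\infty} \;=\; 6(\nu_{\infty}-2),
\]
which forces $\mu \in \{6,12,24,60\}$ for $m \in \{2,3,4,5\}$ respectively.

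Next I would exhibit the containment $\G(m) \subseteq \G$. Since $\G$ has level $m$, the translation $T^m$ lies in $\G$; normality of $\G$ in $\SL$ gives $\gamma T^m \gamma^{-1}\in\G$ for every $\gamma\in\SL$, hence $\Delta(m) \subseteq \G$. Because $m \le 5$, \propref{4.5} yields $\Delta(m) = \G(m)$, and therefore $\G(m) \subseteq \G$.

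Finally I would match indices. The numbers $\mu=6,12,24,60$ listed above coincide exactly with $[\PS:\bar{\G}(m)]$ for $m=2,3,4,5$ (the standard orders of $\PSL_2(\BZ/m\BZ)$). Combined with $\bar{\G}(m) \subseteq \bar{\G}$ and finiteness of the index, this forces $\bar{\G} = \bar{\G}(m)$, hence $\G = \G(m)$. The only step requiring any care is the scalar $\pm I_2$: for $m\ge 3$ we have $-I_2\notin \G(m)$ and $\rho(-I_2)=-I_2\ne I_2$, so the identification lifts verbatim from $\PS$ to $\SL$; for $m=2$ one observes $-I_2\in\G(2)$ and argues projectively, noting that equality of images in $\PS$ together with $\rho(-I_2)=-I_2$ determines $\G$ uniquely as a subgroup of $\SL$. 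The essential content of the argument is \propref{4.5}; the expected main obstacle is simply the bookkeeping at $\pm I_2$ when passing between $\SL$ and $\PS$.
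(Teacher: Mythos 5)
Your argument is correct, and it shares the paper's skeleton up to the key containment: both you and the authors invoke Proposition~\ref{4.4} to force $2\le m\le 5$, then use normality of $\G$ together with $\Delta(m)=\G(m)$ (Proposition~\ref{4.5}) to get $\G(m)\subseteq\G$. Where you diverge is the reverse inclusion. The paper argues structurally: $\G$ and $\G(m)$ are both torsion-free of genus zero, hence each is generated by parabolic elements alone, and since all cusps of both groups have the same width $m$ the two groups have identical sets of parabolic elements, whence equality. You instead argue numerically: from $\mu=m\nu_\infty=6(\nu_\infty-2)$ you read off $\mu\in\{6,12,24,60\}$ for $m=2,3,4,5$, observe that these are exactly the indices $[\PS:\bar{\G}(m)]$, and conclude $\bar{\G}=\bar{\G}(m)$ by comparing indices along the inclusion. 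Your route is more elementary in that it avoids the structure theory of torsion-free genus-zero subgroups as free groups on parabolics, at the cost of importing the orders of $\mathrm{PSL}_2(\BZ/m\BZ)$; the paper's route generalizes more readily to situations where the index is not known in closed form. One small caution: your treatment of $\pm I_2$ at $m=2$ is not quite right as stated --- if $\rho(-I_2)=-I_2$ then the literal kernel of $\rho$ in $\SL$ cannot contain $-I_2$ and so cannot equal $\G(2)$; the identification really takes place for the invariance group $\rho^{-1}(\BC^*I_2)$, i.e.\ projectively in $\PS$. This is an ambiguity the paper itself glosses over (its own proof also works implicitly in $\PS$), so it does not undermine your argument, but the phrase ``determines $\G$ uniquely as a subgroup of $\SL$'' should be replaced by an explicit statement that the conclusion is about the image in $\PS$.
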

\begin{proof}
	Let $h$ be a solution to \eqref{the-equ} such that  $\Gamma_m=\ker \rho$ is a finite index subgroup of $\SL$ of level $m$. According to \propref{4.4}, we have $2\leq m\leq 5$. In the meantime, as $\Gamma_m$ is normal, it contains $\Delta(m)=\Gamma(m)$. In other words, for $2\leq m\leq 5$, we have $\G(m)\subseteq \G_m$. Both groups are genus zero and torsion-free, and so can be generated by parabolic elements only. As the cusps for both groups have the same  width which is $m$, the two groups share the same parabolic elements, and thus must be equal.
	\end{proof}
\begin{cor}
		If $h$ is a solution to \eqref{the-equ} such that $\ker \rho$ is a finite index subgroup of $\SL$ then $h$ is a Hauptmodul for $\ker \rho$.
\end{cor}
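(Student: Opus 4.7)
The plan is to extract the result directly from what has already been established in the proof of \thmref{genus0}, combined with the classification just obtained. Under the hypothesis of this section we have $n=1$ (simple zero at $\infty$), and the preceding theorem identifies $\ker\rho$ as $\G(m)$ for some $m\in\{2,3,4,5\}$. All four of these groups are of genus zero, so in order to prove that $h$ is a Hauptmodul for $\ker\rho$ it suffices to show that $h$ induces a biholomorphism $X(\ker\rho)\longrightarrow\BP_1(\BC)$.

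The key observation is already present in the proof of \thmref{genus0}. There it was shown that, because $\{h,\tau\}=sE_4$ is holomorphic on $\fH$ (so $h$ is locally univalent and takes only simple values and simple poles on $\fH$) and $h$ has a simple zero at $\infty$ (so, using the normality of $\G$ in $\SL$ together with $h\circ\gamma = \rho(\gamma)\cdot h$, $h$ is unramified at every cusp), the induced map $h:X(\ker\rho)\longrightarrow \BP_1(\BC)$ is a nowhere ramified, hence étale, covering of compact Riemann surfaces. Applying Riemann--Hurwitz to this covering together with the fact that $X(\ker\rho)$ has genus $0$ yields degree $d=1$.

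A holomorphic map of degree one between compact Riemann surfaces is a biholomorphism. Therefore $h$ generates the meromorphic function field of $X(\ker\rho)$, which is precisely the statement that $h$ is a Hauptmodul for $\ker\rho$. I would phrase the proof as a one-paragraph corollary that simply invokes the theorem, recalls that in each case $\G(m)$ is of genus zero, points to the degree computation in the proof of \thmref{genus0}, and concludes by the standard fact that a degree one holomorphic map of compact Riemann surfaces is an isomorphism.

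There is no real obstacle here: the substantive work, namely the unramified covering argument and the classification of $\ker\rho$, has already been carried out. The only care needed is to make clear that the input to the degree computation (nowhere ramification on $\fH$ and at the cusps) is valid precisely in the $n=1$ setting that has been standing throughout this section, so that the corollary is being asserted under exactly the same hypotheses as \thmref{genus0} and the preceding theorem.
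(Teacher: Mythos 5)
Your proof is correct. It is, however, not the argument the paper actually writes out: the paper's proof opens by acknowledging exactly your route in one sentence (``this can be readily seen from the proof of \thmref{genus0} as the covering $h:X(\G)\longrightarrow\PC$ is an isomorphism'') and then deliberately gives a different, ``more direct'' proof. The paper takes a known Hauptmodul $f_m$ for $\G(m)$, notes that $\{f_m,\tau\}$ is a holomorphic weight $4$ form for the normalizer of $\G(m)$, namely $\SL$, hence equals $sE_4$ with $s=2\pi^2/m^2$ by comparing $q$-expansions, and then invokes property \eqref{sd5} to conclude that $h$ is a linear fractional transformation of $f_m$ and therefore itself a Hauptmodul. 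The trade-off: your argument is self-contained given \thmref{genus0} (degree-one map of compact Riemann surfaces is a biholomorphism, so $h$ generates the function field), whereas the paper's argument requires the normalizer fact from \cite{mc-se} and a leading-coefficient computation, but in exchange identifies $h$ concretely as a M\"obius image of a classical Hauptmodul such as $\lambda$ --- which is what the later sections exploit to write explicit solutions. Your closing remark about the standing hypothesis $n=1$ is well taken, since the corollary as stated only assumes finite index, and it is precisely the preceding proposition of the section that forces $n=1$ to be the relevant case here.
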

\begin{proof} This can be readily seen from the proof of \thmref{genus0} as the covering $h:X(\G )\longrightarrow  {\mathbb P}_1(\BC)$ is an isomorphism, however, for the sake of completeness, we will provide a more direct proof.
	Let $m$ be the level of $\ker\rho$ and let $f_m$ be a Hauptmodul for $\Gamma(m)$,  $2\leq m\leq 5$. Then $\{f_m,\tau\}$ is a weight 4 holomorphic modular form for the normalizer of $\G(m)$ which is $\SL$. Hence $f_m$ is a solution to \eqref{the-equ} with  $\displaystyle s=\frac{2\pi^2}{m^2}$ as $m$ is also the width at $\infty$ in $\G (m)$. Now, since $h$ and $f_m$ have the same Schwarz derivative,  $h$ is a linear fraction of $f_m$, and so it is also a Hauptmodul for $\G (m)$. 
\end{proof}

The following classical  modular forms and functions  will  be useful for the rest of this paper:

For $\tau\in\fH$, set $\disp q=e^{2\pi i \tau}$ and $\disp t=e^{\pi i\tau}$ so that $t^2=q$.
The Dedekind $\eta-$function is defined by
\[
\eta(\tau)\,=\,q^{\frac{1}{24}}\,\prod_{n\geq1}\,(1-q^n)\,,
\]
and the discriminant $\Delta$ is defined by
\[
\Delta(\tau)=\eta(\tau)^{24}=q\,\prod_{n=1}^{\infty}(1-q^n)^{24}\,.
\]
We also have the Eisenstein series
\[
E_2(\tau)=\frac{1}{2\pi i}\frac{\Delta'}{\Delta}=1-24\,\sum_{n=1}^{\infty}\,\sigma_1(n)\,q^n\,,
\]
with $\sigma_k(n)$ being the sum of the $k$-th powers of the positive divisors of $n$.
The Jacobi (null-) theta functions are given by
\[
\theta_2(\tau)=2\,\sum_{n=0}^{\infty}\,t^{(n+\frac{1}{2})^2}\,=\,
\frac{2\eta(4\tau)^2}{\eta(2\tau)}\,,
\]
\[
\theta_3(\tau)=1+2\,\sum_{n=0}^{\infty}\,t^{n^2}\, =\,
\frac{\eta(2\tau)^5}{\eta(\tau)^2\eta(4\tau)^2}\,,
\]
and
\[
\theta_4(\tau)=1+2\,\sum_{n=0}^{\infty}\,(-1)^n t^{n^2}\,=\,
\frac{\eta(\tau)^2}{\eta(2\tau)}\,.
\]
The elliptic modular functions $\la$ and $j$ are given by
\[
\la=\frac{\theta_2^4}{\theta_3^4}\ ,\quad j=\frac{E_4^3}{\Delta}.
\]
The modularity of the above functions will be invoked subsequently when needed.
 From   \cite{ss2}, we have the following expressions for the hauptmoduln $f_m$ which thus provide, up to a linear fractional transformation, the only solutions to \eqref{the-equ} with a finite index kernels and which are unramified at the cusps:
\[
f_2(\tau)=\lambda(\tau),\ 
f_3(\tau)=\left(\frac{\eta(3\tau)}{\eta(\tau/3)}\right)^3,
\]
\[
f_4(\tau)=\frac{\eta(\tau/2)\eta(4\tau)^2}{\eta(\tau/4)^2\eta(2\tau)},\ 
f_5(\tau)=q^{\frac{1}{5}}\prod_{n\geq1}\,(1-q^n)^{\left(\frac{n}{5}\right)},
\]
where $\left(\frac{\,\cdot\,}{\,\cdot\,}\right)$ is the Legendre symbol.
These Hauptmoduln are $\rho-$equivariant for $\SL$ and the representations are determined by how each one transforms under the coset generators of the subgroups. As an example, for $\Gamma(2)$, the representation is trivial on $\Gamma(2)$ and on the  coset generators, it is determined by:
\begin{equation}\label{la-transf}
\lambda(\tau+1)=\frac{\lambda(\tau)}{\lambda(\tau)-1}\,,\ \lambda(-1/\tau)=1-\lambda(\tau)\,,\ 
\lambda\left(\frac{\tau}{\tau+1}\right)=\frac{1}{\lambda(\tau)}\,,
\end{equation}
\[
\lambda\left(\frac{-1}{\tau+1}\right)=\frac{1}{1-\lambda(\tau)}\,,\ 
\lambda\left(\frac{1+\tau}{-\tau}\right)=1-\frac{1}{\lambda(\tau)}\,.
\]
\begin{remark}
	The results of this section show that all the cases of \propref{prop4.4} for an irreducible representation $\rho$ occur.
\end{remark}

We end this section with an application at the level $m=2$. 
\begin{prop}
	The differential equation
	\begin{equation}\label{odel2}
	y''+\frac{\pi^2}{4}\;E_4\;y=0
	\end{equation}
	has two linearly independent solutions given by
	\[
	y_1=\frac{\theta_2^2}{\theta_3^2\theta_4^2}\,, \quad 
	y_2=\frac{\theta_3^2}{\theta_2^2\theta_4^2}.
	\]
\end{prop}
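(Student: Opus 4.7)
The plan is to exploit the Hauptmodul $\la$ for $\G(2)$. Since $\{\la,\tau\} = (\pi^2/2)\,E_4$ as recalled in the introduction, $\la$ is a solution of \eqref{the-equ} with parameter $s = \pi^2/2$. By the second part of the theorem at the end of Section \ref{mde}, the functions $\la/\sqrt{\la'}$ and $1/\sqrt{\la'}$ are then linearly independent holomorphic solutions of the ODE \eqref{odel2}. So the entire proof reduces to computing $\la'$ explicitly as a theta quotient and simplifying the two resulting square-root expressions.

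The key computation is the classical identity
\[
\la'(\tau) \,=\, i\pi\,\theta_3(\tau)^4\,\la(\tau)\bigl(1 - \la(\tau)\bigr),
\]
which I would derive from the heat equation $\partial_\tau \theta_j = (4\pi i)^{-1}\partial_z^2 \theta_j\big|_{z=0}$ for the Jacobi theta functions (or, alternatively, by logarithmic differentiation of the $\eta$-quotient expressions for $\theta_2,\theta_3,\theta_4$ listed in the preceding section). Using Jacobi's identity $\theta_2^4 + \theta_4^4 = \theta_3^4$, equivalently $1 - \la = \theta_4^4/\theta_3^4$, this collapses to the compact form
\[
\la'(\tau) \,=\, i\pi\,\frac{\theta_2^4\,\theta_4^4}{\theta_3^4}.
\]

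Substituting this into $\la/\sqrt{\la'}$ and $1/\sqrt{\la'}$ and absorbing the overall scalar $1/\sqrt{i\pi}$ into each function (permissible because \eqref{odel2} is linear and homogeneous) produces exactly the two theta quotients in the statement. Linear independence is automatic, since their ratio is $\la$, which is nonconstant. The step I expect to be the main obstacle is establishing the formula for $\la'$ above: it sits outside the Schwarzian/equivariant machinery built up in the earlier sections and must be imported from classical theta-function theory, whereas the remaining algebra — taking the square root, absorbing the constant, and checking linear independence via the ratio — is routine.
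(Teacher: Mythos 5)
Your proposal is correct and follows essentially the same route as the paper: both identify $\la$ as the solution of the Schwarzian equation with $s=\pi^2/2$, invoke the $y_1=h/\sqrt{h'}$, $y_2=1/\sqrt{h'}$ construction, and reduce everything to the classical identity $\la'=i\pi\,\theta_2^4\theta_4^4/\theta_3^4$ (which the paper imports from Rankin's weight-2, level-2 comparison rather than the heat equation, but it is the same key fact). The remaining simplification and the absorption of the constant $(i\pi)^{-1/2}$ match the paper's computation exactly.
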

\begin{proof}
	The solution to the Schwarzian differential equation corresponding to \eqref{odel2} is $\lambda=\theta_2^4/\theta_3^4$. Comparing weight 2 and level 2 modular forms yields \cite[Formula 7.2.12]{rankin}
	\[
	i\pi \theta_2^4=\frac{\lambda'}{1-\lambda}\,,\ \ 
	i\pi \theta_3^4=\frac{\lambda'}{\lambda(1-\lambda)}\,,\ \ 
		i\pi \theta_4^4=\frac{\lambda'}{\lambda}.	
	\]
	Therefore, we get
	\[
	\lambda'=i\pi \theta_4^4\lambda=i\pi\frac{\theta_4^4\theta_2^4}{\theta_3^4},
	\]
	and we choose
	\[
	\sqrt{\lambda'}=(i\pi)^{1/2} \frac{\theta_4^2\theta_2^2}{\theta_3^2},
	\]
	so that
	\[
	\frac{\lambda}{\sqrt{\lambda'}}=(i\pi)^{-1/2}\frac{\theta_2^2}{\theta_3^2\theta_4^2}\ \mbox{ and }\ \frac{1}{\sqrt{\lambda'}}=
	(i\pi)^{-1/2}\frac{\theta_3^2}{\theta_2^2\theta_4^2}
	\]
	yielding the desired expressions for $y_1$ and $y_2$.
	\end{proof}
It is worth noting that, using the Jacobi identity $\theta_3^4=\theta_2^4+\theta_3^4$, we get
\[ y_1-y_2=\frac{\theta_4^2}{\theta_2^2\theta_3^2}
\]
which is also a solution to \eqref{odel2}.

\section{Modular solutions: the ramified  case} \label{ramified}

In this section we continue our investigation of solutions $h$ to \eqref{the-equ} with the corresponding  representation $\rho$ having a finite index kernel $\G $ in $\SL$ and we allow $h$ to be ramified at the cusps. In this case we have $ \{h,\tau\}=sE_4$ with $s=2\pi^2(n/m)^2$,  $\gcd(m,n)=1$ and $n\geq 1$. Here, $m$ is the level of $\G$ and  if we choose $h$ to vanish at $\infty$ then $n$ is its order of vanishing. Denote by $\mbox{C}(\G)$ the set of inequivalent cusps modulo $\G $ so that  $\nu_{\infty}=|\mbox{C}(\G)|$ is the cusp number of $\G$. We consider the covering
\[
h:X(\G)\longrightarrow \PC, 
\]
and let $d$ be its degree. We have seen that $h$ is only ramified at the cusps with the same ramification index which is $n$. The Riemann-Hurwitz formula reads:
\[
2g-2=-2d+\sum_{c\in\mbox{C}(\G)}\,(n-1)
	=-2d+\nu_{\infty}(n-1)
\] 
which we rewrite as
\begin{equation}\label{rh1}
2g-2+\nu_{\infty}=-2d+n\nu_{\infty}.
\end{equation}
In the meantime, we have the natural covering induced by the inclusion $\G\subseteq\SL$:
\[
X(\G )\longrightarrow\PC
\]
for which the Riemann-Hurwitz formula reads as 
\begin{equation}\label{rh2}
\frac{\mu}{6}= 2g-2+\nu_{\infty},
\end{equation}
as $\G $ is torsion-free and where $\mu=[\PS,\bar{\G }]$ is also the degree of this covering. The second covering is also ramified exactly at the cusps and the ramification indices are all equal to the cusp width $m$ since $\G$ is normal in $\SL$.  In particular, we have  $ \mu=m\nu_{\infty}$.
Essentially, the integer $m$ plays the same role for the second covering that $n$ plays for the first one. 
Equating \eqref{rh1} and \eqref{rh2} proves the following
\begin{prop}
	Let  $h$ be a solution to \eqref{the-equ} such that $\G$ has a finite index in $\SL$ with  $s=2\pi^2(n/m)^2$,  $\gcd(m,n)=1$. Let $d$ be the degree of $h$ as a covering map. Then
	\begin{equation}\label{deg-index}
	(6n-m)\nu_{\infty}=12d.
	\end{equation} 
\end{prop}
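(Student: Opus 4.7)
The plan is to combine the two Riemann--Hurwitz computations already carried out in the two paragraphs immediately preceding the statement. The first covering $h : X(\G) \to \PC$ has degree $d$ and, since $\{h,\tau\}=sE_4$ is holomorphic on $\fH$ and $\G$ is torsion-free, it is unramified on $\fH$; its ramification is concentrated at the cusps, each with common ramification index $n$ because $\G$ is normal in $\SL$. This gives \eqref{rh1}. The second covering $X(\G)\to X(1)$ has degree $\mu=[\PS:\bar{\G}]$, and again by normality every cusp of $\G$ sits over $\infty$ with the same ramification index $m$; together with $\G$ torsion-free, this yields \eqref{rh2} and the identity $\mu=m\nu_\infty$.

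The plan is then essentially algebraic. First, I would reproduce \eqref{rh1} as
\[
2g-2+\nu_{\infty}=-2d+n\nu_{\infty}.
\]
Next, I would substitute $\mu=m\nu_\infty$ into \eqref{rh2} to rewrite the left-hand side of the genus identity as $m\nu_\infty/6$. Equating the two expressions for $2g-2+\nu_\infty$ gives
\[
\frac{m\nu_{\infty}}{6}=-2d+n\nu_{\infty},
\]
and multiplying by $6$ and transposing the $\nu_\infty$ terms produces $(6n-m)\nu_\infty=12d$.

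There is no genuine obstacle; the proposition is a direct consolidation of \eqref{rh1}, \eqref{rh2}, and the equality $\mu=m\nu_\infty$. The only points that need to be held firmly in mind are that $h$ has no ramification in the interior of $\fH$ and that the ramification indices over $\infty$ in each of the two coverings are uniform. Both rest on inputs already established: the holomorphy of $\{h,\tau\}$ together with the local univalence of $h$ on $\fH$ rules out interior ramification for the first covering, while $\G$ being torsion-free does the same for the natural covering, and normality of $\G$ in $\SL$ forces the cusp ramification indices to be the constants $n$ and $m$ respectively. With these ingredients in place, the claim is a one-line calculation.
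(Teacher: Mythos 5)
Your proposal is correct and follows exactly the paper's argument: the proposition is obtained by equating the two Riemann--Hurwitz identities \eqref{rh1} and \eqref{rh2} together with $\mu=m\nu_\infty$, which is precisely what the paper does in the paragraphs preceding the statement. No further comment is needed.
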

We assume for the remainder of this section  that $\rho$ is irreducible. According to \secref{reps}, the level $m$ satisfies
 $2\leq m\leq 5$ so that $\G =\G (m)$. 

We can state
\begin{thm} \label{thmp1}
	Let $m$ be an integer such that $2\leq m\leq5$ and $n$ such that $\gcd(m,n)=1$. The solutions to
	\[
\{h,\tau\}=2\pi^2\left(\frac{m}{n}\right)^2\,E_4	
	\]
	are modular functions for $\G (m)$. Moreover, each solution is a rational function of a Hauptmoduln for $\G(m)$ of degree $d$ satisfying $(6n-m)\nu_{\infty}=12d$.
\end{thm}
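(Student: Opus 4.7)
\emph{Proof plan.} The strategy is to identify $\G=\ker\rho$ exactly as $\G(m)$ and then exploit the genus-zero structure of $X(\G(m))$ to write $h$ as a rational function of a Hauptmodul.

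First, \propref{prop5.2} applied to the parameter $s=2\pi^2(n/m)^2$ with $\gcd(m,n)=1$ gives $T^m\in\G$, and in fact $m$ is precisely the order of $\bar\rho(T)$: a smaller exponent would yield a lower-terms form for $r=n/m$. Hence the cusp width of $\G$ at $\infty$ equals $m$. The standing irreducibility and finite-image hypothesis together with \propref{prop4.4} then force $m\in\{2,3,4,5\}$, while the corollary to Mason's theorem in \secref{reps} yields $\G(m)\subseteq \pm\G$. Since the Schwarzian setup in \secref{mde} forces $\rho(-I_2)=-I_2$, we have $-I_2\notin\G$; this collapses the $\pm$ for $m\geq 3$, and the case $m=2$ is handled directly using $-I_2\in\G(2)$, so in all cases $\G(m)\subseteq\G$.

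Next, upgrade this to $\G=\G(m)$. By \propref{4.5}, $\G(m)=\Delta(m)$ is generated by $\SL$-conjugates of $T^m$, all of which lie in $\G$ by normality of $\G$ in $\SL$. For the reverse inclusion I would mirror the parabolic-generation argument from \secref{unramified}: $\G$ is normal, torsion-free (the proof of torsion-freeness in \secref{unramified} uses only the nowhere vanishing of $h'$, which still holds), and has cusp width exactly $m$ at every cusp; any element of $\G\setminus\G(m)$ would shrink some cusp stabilizer strictly below $\langle T^m\rangle$, contradicting the minimality extracted above. Hence $\G=\G(m)$ and $h$ is a modular function for $\G(m)$.

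Finally, since $X(\G(m))$ has genus zero with Hauptmodul $f_m$, the modular function $h$ equals $R(f_m)$ for some rational $R$. The degree of $R$ as a self-map of $\PC$ coincides with the covering degree $d$ of $h\colon X(\G(m))\to\PC$, and the identity $(6n-m)\nu_\infty=12d$ is exactly the proposition established immediately before the theorem. The main obstacle is the upgrade $\G(m)\subseteq\pm\G\;\rightsquigarrow\;\G=\G(m)$: the $\pm$ ambiguity is most delicate in the octahedral case $m=4$, where Mason's parameter $N=8$ differs from $m$, and one must carefully combine the torsion-freeness of $\G$ with the Wohlfahrt-level rigidity of \propref{4.5}. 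Once equality is in hand, the rational-function and degree assertions are immediate.
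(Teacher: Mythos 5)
Your route is the paper's: bound the level via the finite-image classification of \secref{reps} and \propref{prop4.4}, identify $\ker\rho$ with $\G(m)$, and then read off the Hauptmodul and degree statements from genus zero and the Riemann--Hurwitz identity $(6n-m)\nu_{\infty}=12d$ established just before the theorem. One simplification: the whole $\pm$ discussion around Mason's corollary (and the ``main obstacle'' you flag in the octahedral case) is unnecessary, because your second route already does the job cleanly. Since $m$ is the order of $\bar{\rho}(T)$, we have $\rho(T^m)\in\BC^*I_2$, hence $\rho(\gamma T^m\gamma^{-1})\in\BC^*I_2$ for all $\gamma\in\SL$, hence $\bar{\rho}$ is trivial on $\Delta(m)=\G(m)$ by \propref{4.5}; that is exactly what ``$h$ is a modular function for $\G(m)$'' requires, with no case analysis on $-I_2$.

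The genuine gap is in your upgrade to equality. The claim that ``any element of $\G\setminus\G(m)$ would shrink some cusp stabilizer strictly below $\langle T^m\rangle$'' is not an argument: adjoining elements cannot shrink stabilizers, and an element of $\G\setminus\G(m)$ need not be parabolic, so it need not interact with any cusp stabilizer at all. The mechanism the paper actually uses (end of \secref{unramified}) is that $\G$ is torsion-free \emph{and genus zero}, hence freely generated by parabolic elements; each parabolic of $\G$ fixes a cusp of width $m$ (by normality), so is an $\SL$-conjugate of a power of $T^m$ and lies in $\Delta(m)=\G(m)$, giving $\G\subseteq\G(m)$. To run this here you must first supply genus zero of $X(\G)$, and you cannot cite \thmref{genus0} for it: its proof uses that the covering $h:X(\G)\to\PC$ is unramified, i.e.\ precisely the hypothesis $n=1$ that you are not assuming. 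Instead, genus zero follows from the inclusion you already have: $X(\G(m))\to X(\G)$ is a nonconstant holomorphic map of compact Riemann surfaces and $X(\G(m))$ has genus $0$ for $2\le m\le 5$, so $X(\G)$ does too. With that inserted, the equality $\G=\G(m)$ closes, and the remaining assertions (rational in a Hauptmodul, degree $d$, and $(6n-m)\nu_{\infty}=12d$) follow as you say.
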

For each $\G(m)$, the cusp number is given by \cite{rankin}
\[
\nu_{\infty}=\frac{\mu}{m}=\frac{m^2}{2}\prod_{p\mid n,\, p \,{\tiny\mbox{prime}}}\left(1-\frac{1}{p^2}\right).
\]
The following table illustrate the relation \eqref{deg-index} between the degree $d$ and the ramification index $n$ for the values of $m$ of interest.

\begin{center}
\begin{tabular}{|c|c|}
	\hline
	$m$& Relation\\
	\hline
&	\\
	2&  $2d+1=3n$  \\  &\\
	3&   $d+1=2n$  \\  &\\
	4&    $d+2=3n $ \\  &\\
	5&     $d+5=6n  $\\
	\hline
\end{tabular}
\end{center}
In the following, $t$ will indicates a Hauptmodul for $\G(m)$, and we provide solutions for few values of the triples $(m,n,d)$.
\begin{prop}
	Let $t=\lambda$ be a Hauptmodul for $\G(2)$, then
	\[
	h=\frac{t^3(t-2)}{t^4-2t^3+4t-2}
	\]
	is  a solution to $\displaystyle \{h,\tau \}=2\pi^2(3/2)^2\,E_4$.
\end{prop}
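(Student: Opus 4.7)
My approach is to reduce the identity $\{h,\tau\}=\tfrac{9\pi^2}{2}E_4$, via the cocycle property \eqref{sd2} of the Schwarz derivative, to a purely algebraic identity in $t=\lambda$, and then verify it using a clean factorization of the derivative of $F(t):=t^3(t-2)/(t^4-2t^3+4t-2)$. Setting $w=\lambda$ in \eqref{sd2} gives $\{h,\tau\}=\{h,\lambda\}(\lambda')^2+\{\lambda,\tau\}$, and since $\{\lambda,\tau\}=\tfrac{\pi^2}{2}E_4$ the claim is equivalent to $\{h,\lambda\}(\lambda')^2=4\pi^2 E_4$.

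To express $(\lambda')^2$ rationally in $\lambda$, the identity $i\pi\theta_3^4=\lambda'/(\lambda(1-\lambda))$ yields $(\lambda')^2=-\pi^2\theta_3^8\lambda^2(1-\lambda)^2$, while combining the Jacobi sum $2E_4=\theta_2^8+\theta_3^8+\theta_4^8$ with $\theta_2^8=\lambda^2\theta_3^8$ and $\theta_4^8=(1-\lambda)^2\theta_3^8$ gives $E_4=\theta_3^8(\lambda^2-\lambda+1)$. Hence
\[
(\lambda')^2 \;=\; -\frac{\pi^2\,\lambda^2(1-\lambda)^2}{\lambda^2-\lambda+1}\,E_4,
\]
so that the proposition is equivalent to the purely algebraic identity
\[
\{F(t),t\}\;=\;-\frac{4(t^2-t+1)}{t^2(1-t)^2}.
\]

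To verify this, I exploit a clean factorization: writing $p(t)=t^3(t-2)$ and $q(t)=t^4-2t^3+4t-2$, a direct computation gives $p'q-pq'=12\,t^2(t-1)^2$, hence $F'(t)=12\,t^2(t-1)^2/q(t)^2$. Setting $y(t):=q(t)/(t(t-1))$, this reads $F'(t)\,y(t)^2=12$; differentiating twice and simplifying yields $\{F,t\}=-2y''/y$, so it remains to check $y''/y=2(t^2-t+1)/(t(t-1))^2$. Computing $y'$ and $y''$ from $y=t^2-t-1+(3t-2)/(t(t-1))$ and clearing the denominator $(t(t-1))^2$ reduces the claim to the polynomial identity $t^6-3t^5+3t^4+2t^3-6t^2+6t-2=(t^2-t+1)\,q(t)$, which is immediate by expansion.

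The main obstacle is simply keeping the Schwarzian computation tidy; the factorization $p'q-pq'=12\,t^2(t-1)^2$ is the observation that makes the last step short. Geometrically, this factorization reflects the structure of the covering $h\colon X(\G(2))\to\PC$: degree $d=4$, triply ramified at each of the three cusps of $\G(2)$ (namely $t=0,1,\infty$) and unramified elsewhere, consistent with $(6n-m)\nu_{\infty}=12d$ specialized to $(m,n,\nu_\infty,d)=(2,3,3,4)$.
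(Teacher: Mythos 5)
Your proof is correct, but it takes a genuinely different route from the paper's. The paper never computes a Schwarzian directly: it checks from the transformation laws \eqref{la-transf} that $h$ is $\rho$-equivariant for $\SL$, observes that $h'=12t^2(t-1)^2 t'/q(t)^2$ (with $q(t)=t^4-2t^3+4t-2$) is nonvanishing on $\fH$ because $t=0,1$ occur only at cusps, reads off $h=4096\,q^{3/2}+\mathrm{o}(q^{3/2})$, and then invokes the one-dimensionality of the space of weight $4$ holomorphic forms for $\SL$ to conclude $\{h,\tau\}=s\,E_4$ with $s$ determined by the leading coefficient. You instead reduce everything to an explicit algebraic identity via the cocycle property \eqref{sd2}, the classical facts $\{\lambda,\tau\}=\tfrac{\pi^2}{2}E_4$ and $2E_4=\theta_2^8+\theta_3^8+\theta_4^8$, and the same key factorization $p'q-pq'=12\,t^2(t-1)^2$; I have checked the resulting polynomial identity $(t^2-t+1)q(t)=t^6-3t^5+3t^4+2t^3-6t^2+6t-2$ and the step $\{F,t\}=-2y''/y$ for $F'=12/y^2$, and both are correct. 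Your approach buys a self-contained verification that needs no information about the values of $\lambda$ at the cusps nor any appeal to the structure of $M_4(\SL)$, and the relation $F'=12/y^2$ hands you at once the square root of $h'/\lambda'$ that the paper only remarks upon (hence explicit solutions $y_1=h/\sqrt{h'}$, $y_2=1/\sqrt{h'}$ of the associated ODE). The paper's approach, on the other hand, produces as a byproduct the explicit representation $\rho$ on coset generators and the local univalence of $h$, which are the data actually used in the surrounding sections, and it is the argument that scales to the other entries of the table where a direct Schwarzian computation would be heavier.
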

\begin{proof}
Since $\la(\infty)=0$, we can deduce the values at the other two cusps 0 and 1 from the relations \eqref{la-transf}: $\la(0)=1$ and $\la (1)=\infty$. From the same relations we obtain:
\[
h(\tau+1)=-h(\tau)\,,\  h(-1/\tau)=\frac{h(\tau)+1}{3h(\tau)-1}\,,\  
h\left(\frac{\tau}{\tau+1}\right)=\frac{-h(\tau)+1}{3h(\tau)+1}\,,
\]
\[
h\left(\frac{-1}{\tau+1}\right)=\frac{h(\tau)-1}{3h(\tau)+1}\,,\ 
h\left(\frac{1+\tau}{-\tau}\right)=\frac{h(\tau)+1}{-3h(\tau)+1}\,.
\]
Therefore $h$ is $\rho-$equivariant with $\rho$ trivial on $\G (2)$ and given by the above formulas on the coset generators. Furthermore,
\[
h'(\tau)=\frac{12t^2(t-1)^2\,t'}{(t^4-2t^3+4t-2)^2},
\]
so that $h'$ does not vanish on $\fH$ as the values 0 and 1 are attained at the cusps $\infty$ and 0 respectively and $t'=\lambda'$ does not vanish on $\fH$. As $t=16\;q^{1/2}-128q+\mbox{o}(q)$ we see that $h=4096\;q^{3/2}+\mbox{o}(q^{3/2})$.  Therefore $\{h,\tau \}$ is a weight 4 modular form for $\SL$ which is holomorphic on $\fH$ and at the cusps and clearly  $\displaystyle \{h,\tau \}=2\pi^2(3/2)^2\,E_4$.
	\end{proof}
It is worth mentioning that $h'/\lambda'$ is readily a square, and so it is possible to write down explicit solutions to the ODE $\displaystyle y''+2\pi^2(3/2)^2\,E_4\,y=0$.

In the following table we will provide other solutions for small values of $n$. The proof is essentially similar to the level 2 case but more tedious. Here again $t$ stands for a Hauptmodul for the relevant $\G (m)$.
\newpage
{\small\begin{center}
	\begin{tabular}{|l|c|}
		\hline
		&\\
		$\disp (m,n,d)$& $\disp h$\\
		&\\
		\hline
			&\\
	$(2,3,4)$&$\displaystyle\frac{t^3(t-2)}{t^4-2t^3+4t-2}$\\
	&\\
	\hline
		&\\
	$(2,5,7)$&$\displaystyle\frac {{t}^{5} \left( 2\,{t}^{2}-7\,t+7 \right) }{ \left( t-2 \right)  \left( 2\,{t}^{6}-3\,{t}^{5}+{t}^{4}+2\,{t}^{3}+4\,{t}^{2}-6\,t+2 \right) }$\\
	&\\
	\hline
		&\\
	$(3,2,3)$&  $\displaystyle \frac {{t}^{2} \left( t+1 \right) }{26\,{t}^{3}+26\,{t}^{2}+27\,t+9}$\\
	&\\
	\hline
		&\\
	$(3,4,7)$& $\displaystyle \frac {{t}^{4} \left( 9\,{t}^{3}+21\,{t}^{2}+21\,t+7 \right) }{ \left( t+1 \right)  \left( 27\,{t}^{6}+36\,{t}^{5}+27\,{t}^{4}-6\,{t}^{3}-15\,{t}^{2}-6\,t-1 \right) } $ \\
	&\\
	\hline
		&\\
	$(4,3,7)$&$\displaystyle   \frac {{t}^{3} \left( {t}^{4}+7\,{t}^{3}+21\,{t}^{2}+28\,t+14 \right) }{ \left( {t}^{4}+{t}^{3}+3\,{t}^{2}+4\,t+2 \right)  \left( t+2 \right) ^{3}} $  \\
	&\\
	\hline
		&\\
		$(4,5,13)$&$\displaystyle \frac {{t}^{5} \left( {t}^{8} +13\,{t}^{7}+78\,{t}^{6}+286\,{t}^{5}+689\,{t}^{4}+1092\,{t}^{3}+1092\,{t}^{2}+624\,t+156\right) }{ \left( t+2 \right)   ^{5}  \left( {t}^{8}+3\,{t}^{7}+8\,{t}^{6}+6\,{t}^{5}-11\,{t}^{4}-28\,{t}^{3}-28\,{t}^{2}-16\,t-4 \right) }$\\
		&\\
		\hline
		&\\
	$(5,2,7)$&$\displaystyle  \frac {{t}^{2} \left( {t}^{5}-7 \right) }{7\,{t}^{5}+1} $  \\
	&\\
	\hline
		&\\
	$(5,3,13)$&$\displaystyle  \frac {{t}^{3} \left( {t}^{10}-39\,{t}^{5}-26 \right) }{26\,{t}^{10}-39\,{t}^{5}-1}  $  \\
		&\\
		\hline
		
	\end{tabular}
\end{center}
}
\section{The reducible case}

We now go back to our equation \eqref{the-equ} and let $h$ be a solution and $\rho$ its 2-dimensional representation. If $\ker \rho$ has finite index in $\SL$ and $\rho$ is irreducible, then according to \secref{reps}, we are in one of the cases  of \secref{unramified} and \secref{ramified}. In this section we will focus on the case of $\rho$ being reducible. There exists a constant matrix $\sigma\in\GC$ such $\sigma\rho\sigma^{-1}$ is (upper)  triangular, and according to \propref{sigma}, it corresponds to the equivariant function $g=\sigma\cdot f$ which satisfies the same Schwarzian equation as $f$. In addition, the kernels of both representations are the same. Therefore, without loss of generality, we will assume that $\rho$ is triangular.

Now we turn our attention to the question whether there exists a solution with the representation $\rho$ being triangular and $\ker\rho$ having a finite index in $\SL$. 
\begin{lem}
	If $h$ is a solution with $\rho$ reducible and $\ker\rho$ is of finite index, then $\ker\rho=\G'$, the commutator group of $\SL$.
\end{lem}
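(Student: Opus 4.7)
The plan is to exploit the cyclic structure of $\SL/\G' \cong \BZ/12$ together with the torsion-freeness of $\ker\rho$ proved in \secref{unramified}. As already reduced, $\rho$ may be taken upper triangular, so $\rho(\gamma) = \begin{pmatrix}\chi_1(\gamma) & \beta(\gamma) \\ 0 & \chi_2(\gamma)\end{pmatrix}$ with $\chi_1, \chi_2 \colon \SL \to \BC^*$ characters.

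First I show $\G' \subseteq \ker\rho$. Characters of $\SL$ factor through the abelianization $\SL/\G' \cong \BZ/12$ recalled in \secref{reps}, so $\chi_i|_{\G'} = 1$ and $\rho|_{\G'}$ lands in the unipotent subgroup $\begin{pmatrix}1 & * \\ 0 & 1\end{pmatrix} \cong (\BC, +)$. The cocycle $\beta$ then restricts to an additive homomorphism $\G' \to \BC$, and since $\ker\rho$ has finite index in $\SL$ (hence in $\G'$) its image is a finite subgroup of $(\BC, +)$, which must be $\{0\}$. Combined with $\chi_i|_{\G'} = 1$, this gives $\rho|_{\G'} = I$, so $\rho$ factors through $\SL/\G' \cong \BZ/12$ generated by the class of $T$.

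Next, because $\rho(-I) = -I$ (from $h = y_1/y_2$ with $(y_1, y_2)^t$ a weight $-1$ vector-valued modular form) and $-I \equiv T^6 \pmod{\G'}$ --- a direct computation in $\SL/\G'$ using $S^2 = (ST)^3 = -I$, which gives $S \equiv -3T$ and hence $-I = S^2 \equiv 6T \equiv T^6$ --- we have $\rho(T)^6 = -I$. The order of $\rho(T)$ therefore divides $12$ but not $6$, so it is either $4$ or $12$. Suppose for contradiction that it is $4$; then $\ker\rho/\G'$ is the unique subgroup of $\BZ/12$ of order $3$, namely $\{0, 4, 8\}$. The class of $S$ in $\SL/\G'$ is $-3T \equiv 9$, so the class of $ST^{-1}$ is $9 - 1 = 8 \in \{0, 4, 8\}$, which means $ST^{-1} \in \ker\rho$. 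But $ST^{-1} = \begin{pmatrix}0 & -1 \\ 1 & -1\end{pmatrix}$ has trace $-1$, hence is elliptic of order $3$ with fixed point $e^{i\pi/3} \in \fH$, contradicting the torsion-freeness of $\ker\rho$ from \secref{unramified}. Consequently $\rho(T)$ has order $12$, and $\ker\rho = \G'$.

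The main obstacle is recognizing that the hypothetical order-$4$ case for $\rho(T)$ pins down $\ker\rho/\G'$ as the specific subgroup $\{0,4,8\}$ and then locating $ST^{-1}$ as an elliptic element whose abelianization class lands in it; once that is seen, the contradiction with the general torsion-freeness result is immediate.
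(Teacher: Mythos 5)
Your proof is correct, and its second half takes a genuinely different route from the paper's. The first step, $\G'\subseteq\ker\rho$, is essentially the paper's: the diagonal entries are characters, hence trivial on $\G'$, and the resulting unipotent image of $\G'$ is trivial (the paper argues that a unipotent matrix of finite order is the identity; your finite-subgroup-of-$(\BC,+)$ argument is an equivalent variant). For the identification of $\ker\rho$ among the subgroups containing $\G'$, the paper appeals to the explicit lattice of normal subgroups between $\ol{\G}(6)$ and $\PS$ taken from Rankin, and eliminates the two intermediate candidates $\ol{\G}^2$ and $\ol{\G}^3$ by computing via Riemann--Hurwitz that they carry torsion. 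You instead work entirely inside the cyclic quotient $\SL/\G'\cong\BZ/12\BZ$: the normalization $\rho(-I_2)=-I_2$ (valid here, since $\rho$ is the lift coming from a weight $-1$ vector-valued form and conjugation into triangular form preserves it) gives $\rho(T)^6=-I_2$, so $\rho(T)$ has order $4$ or $12$, and the order-$4$ case is excluded by locating the explicit elliptic element $ST^{-1}$, of class $8$ in $\BZ/12\BZ$, inside $\ker\rho$. Both arguments ultimately rest on the torsion-freeness of $\ker\rho$ from \secref{unramified}. What your version buys is self-containedness --- no appeal to the classification of normal subgroups of level dividing $6$ --- at the modest cost of invoking the normalization $\rho(-I_2)=-I_2$, which the paper's lattice argument, carried out in $\PS$, does not need. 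All the individual computations ($[S]=-3[T]$, $[-I_2]=6[T]$, the trace and fixed point of $ST^{-1}$) check out.
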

\begin{proof}
Suppose that $h$ is a such solution with $\rho$  upper-triangular. Write, for $\gamma\in\SL$, $\disp \rho(\gamma)=\binom{a(\gamma)\ \ b(\gamma)}{\ 0\quad\ \   d(\gamma)}$. Then $a$ and $b$ are characters of $\SL$ and so they are trivial on the commutator $\G'$. In other words, for each $\gamma\in\G'$, $\rho(\gamma)$ is unipotent and since it has finite order (as Im$\,\rho$ is finite) it must be the identity. Therefore we have $\G'\subseteq\ker\rho$. Recall that $\G'$ is a genus 1, level 6, torsion-free and normal subgroup of $\SL$ of index 12.  In the meantime, according to \cite[Page 36]{rankin}, there are only 7 proper normal subgroups between $\ol{\G}(6) $ and $\PS$ as shown in the figure
	\begin{center}
	\begin{tikzpicture}
	\draw (0,8) circle (0.5) node {$\ol{\G}(1)$};
	\draw (-1.65,6.35)--(-0.35,7.65); 	\draw (1.65,6.35)--(0.35,7.65);	
	\draw (-2,6) circle (0.5) node {$\ol{\G }^2$};
	\draw (2,6) circle (0.5) node {$\ol{\G }^3$};
	\node at (-1.2,7.1){2};\node at (1.2,7.1){3};
	\draw (-3.65,4.35)--(-2.35,5.65); 	\draw (-0.35,4.35)--(-1.65,5.65);
	\draw (0.35,4.35)--(1.65,5.65);\draw (3.65,4.35)--(2.35,5.65);
	\draw (-4,4) circle (0.5) node {$\ol{\G }(2)$};
	\draw (0,4) circle (0.5) node {$\ol{\G }'$};
	\draw (4,4) circle (0.5) node {$\ol{\G }(3)$};
	\node at (-3.2,5.1){3};\node at (-0.8,5.1){3};
	\node at (0.8,5.1){2};\node at (3.2,5.1){4};
	\draw (-2.35,2.35)--(-3.65,3.65); 	\draw (-1.65,2.35)--(-0.35,3.65);
	\draw (1.65,2.35)--(0.35,3.65);\draw (2.35,2.35)--(3.65,3.65);
	\node at (-2.8,3.1){3};\node at (-1.2,3.1){3};
	\node at (1.2,3.1){4};\node at (3.2,2.9){2};
	\draw (-2,2) circle (0.5) node {$\ol{\G ^2}'$};
	\draw (2,2) circle (0.5) node {$\ol{\G ^3}'$};
	\draw (-0.35,0.35)--(-1.65,1.65); \draw (0.35,0.35)--(1.65,1.65);
	\node at (-0.8,1.1){4};\node at (1.2,0.9){3};
	\draw (0,0) circle (0.5) node {$\bar{\G }(6)$};
	{\tiny \node at (0.5,-1){Normal subgroups between $\ol{\G}(6) $ and $\PS$};}
	\end{tikzpicture}
\end{center}
In particular, there are only two proper normal subgroups containing the commutator, namely $\bar{\G} ^2$ of level 2 and index 2  generated of the squares of elements of $\PS$ and the subgroup $\bar{\Gamma}^3$ of level 3 and index 3 generated by the cubes of elements of $\PS$. However, none of these 2 groups is torsion-free as can be easily seen from the Riemann-Hurwitz formula \eqref{r-h}. Therefore, $\ker\rho=\G'$.
\end{proof}

This leads us to the following conclusion.
\begin{thm} There is no solution to \eqref{the-equ} such that $\rho$ is reducible and $\ker\rho$ having a finite index in $\SL$.
\end{thm}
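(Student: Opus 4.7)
The plan is to combine the preceding lemma with Riemann--Hurwitz and close the argument by a purely numerical contradiction. By that lemma it suffices to rule out the case $\ker\rho=\G'$. In that case $h$, being $\rho$-equivariant with $\ker\rho=\G'$, is $\G'$-invariant and hence descends to a non-constant holomorphic map
\[
h\colon X(\G')\longrightarrow \PC.
\]
First I would recall from the paper that $\G'$ is torsion-free, normal in $\SL$, of level $6$, of genus one, and has exactly one cusp (this last fact follows from \eqref{r-h} with $\mu=6$, $e_2=e_3=0$ and $g=1$). Since $\{h,\tau\}=sE_4$ is holomorphic on $\fH$, the theorem of \secref{mde} ensures $h'$ is nowhere vanishing on $\fH$, so $h$ is locally univalent and all ramification of the covering is concentrated at the single cusp.

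Next I would apply Riemann--Hurwitz (or equivalently \eqref{deg-index}) to this covering. Denoting its degree by $d\ge 1$ and the ramification index at the cusp by $n$, the formula reads
\[
0 \,=\, 2\cdot 1-2 \,=\, -2d+(n-1),
\]
so $n=2d+1$. On the other hand, the ramification index at any point of a non-constant holomorphic map between compact Riemann surfaces cannot exceed the degree of the map, since the cusp alone contributes multiplicity $n$ to the fiber over $h(\mathrm{cusp})\in\PC$, whose total multiplicity is $d$. Thus $n\le d$, and combined with $n=2d+1$ this gives $2d+1\le d$, i.e.\ $d\le -1$, contradicting $d\ge 1$.

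The step I would single out as most delicate is the translation at the beginning: the algebraic hypothesis $\ker\rho=\G'$ must be repackaged as a concrete covering of $\PC$ by the genus-one surface $X(\G')$ whose unique cusp is the sole possible ramification locus. Once that dictionary is in place the Riemann--Hurwitz numerics are immediate, and the inequality $n\le d<n$ finishes the proof, completing the classification in the reducible case.
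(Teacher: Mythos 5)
Your proposal is correct and follows essentially the same route as the paper: reduce to $\ker\rho=\G'$ via the lemma, note that $X(\G')$ has genus one with a single cusp carrying all the ramification, derive $n=2d+1$ from Riemann--Hurwitz, and contradict $n\le d$. The only cosmetic difference is that you apply Riemann--Hurwitz directly to the covering $h\colon X(\G')\to\PC$ rather than combining the two displayed identities \eqref{rh1} and \eqref{rh2}, which amounts to the same computation.
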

\begin{proof}
	Suppose there is such $\rho-$equivariant function $h$. According to the above lemma $\ker\rho =\G '$ whose image  in $\PS$ has index $\mu=6$, level $m=6$ and genus $g=1$. Recall that $\mu=m\nu_{\infty}$ so that $\G '$ has only one cusp. Let $d$ be the degree of the covering $h: X(\G ')\longrightarrow\BP_1$ which is unramified on $\fH$ and (possibly) ramified at the lone cusp $\infty$.
	Combining \eqref{rh1} and \eqref{rh2}, which are still valid in our case, yields
\[
n=2d+1,
\]
where $n$ is the ramification index of $h$ at $\infty$. However, this is impossible since $d\geq n$. 
\end{proof}

In conclusion, to supplement \thmref{thmp1} without the irreducibility condition, we can state
\begin{thm}
	The solutions to  \eqref{the-equ} are modular functions if and only if $s=2\pi^2(n/m)^2$ with $m$ and $n$ two positive integers satisfying $2\leq m\leq 5$ and $\gcd(m,n)=1$ in which case the invariance group of the solutions is given by $\G(m)$.
\end{thm}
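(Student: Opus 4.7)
The plan is to synthesize the preceding sections; the statement is a biconditional together with an identification of the invariance group, so there are three items to establish.

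The \emph{``if''} direction is essentially \thmref{thmp1}. Given $s = 2\pi^2(n/m)^2$ with $2 \leq m \leq 5$ and $\gcd(m,n) = 1$, that theorem produces a solution which is a modular function for $\G(m)$ (explicit in each row of the table in \secref{ramified}, and reducing to the Hauptmoduln $f_2, f_3, f_4, f_5$ displayed in \secref{unramified} when $n = 1$). By \propref{sigma}, every other solution of \eqref{the-equ} is a Möbius image of this one and hence has the same invariance group, so the invariance group of \emph{all} solutions is exactly $\G(m)$.

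For the \emph{``only if''} direction, suppose $h$ is a solution of \eqref{the-equ} that is a modular function for some finite index subgroup of $\SL$, and let $\rho$ be the attached representation, so that $\ker\rho$ has finite index in $\SL$ and $\rho$ has finite image. The representation $\rho$ is either reducible or irreducible; the theorem immediately preceding this one rules out the reducible case, so $\rho$ must be irreducible. Let $m$ be the (Wohlfahrt) level of $\ker\rho$, which by normality equals the common cusp width and coincides with the order of $\bar\rho(T)$. Since $h$ is a modular function, it admits a $q_m$-expansion at $\infty$ with $q_m = e^{2\pi i\tau/m}$, so \propref{prop5.1} and \propref{prop5.2} force $s = 2\pi^2 r^2$ with $r \in \BQ\setminus\BZ$; writing $r = n/m$ in lowest terms yields $n \in \BZ_{>0}$ and $\gcd(m,n) = 1$. \propref{prop4.4} then restricts $m$ to $\{2,3,4,5\}$. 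Finally, the identification $\ker\rho = \G(m)$ is the last theorem of \secref{unramified} in the case $n=1$ and, for $n \geq 2$, follows from the same argument carried out in the proof of \thmref{thmp1}: normality and \propref{4.5} give $\G(m) = \Delta(m) \subseteq \ker\rho$, while torsion-freeness, equality of cusp widths, and the degree relation \eqref{deg-index} force equality.

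The main obstacle, such as it is, is making sure that the identification $\ker\rho = \G(m)$ is handled uniformly for every admissible $n$ rather than only for the small values tabulated in \secref{ramified}; but this is already secured by the generality of \thmref{thmp1}, whose proof uses only normality, the Mason--Wohlfahrt bound, and Riemann--Hurwitz, none of which depends on $n$ beyond the coprimality condition. Beyond this, the theorem is really a packaging of the earlier results of \secref{reps}, \secref{unramified}, \secref{ramified}, and the reducibility obstruction just established in the present section.
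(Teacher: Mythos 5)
Your proposal is correct and follows essentially the same route as the paper, which states this theorem as a direct synthesis of Theorem~\ref{thmp1}, the non-existence result for reducible $\rho$, and the kernel identification $\ker\rho=\G(m)$ from the earlier sections. The only caveat is that the final step of the ``only if'' direction rests on the genus-zero, torsion-free, generated-by-parabolics argument from the last theorem of \secref{unramified} (with genus zero for general $n$ following from the containment $\G(m)\subseteq\ker\rho$), rather than on the degree relation \eqref{deg-index}, but this is a minor imprecision and the assembled ingredients are exactly the paper's.
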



\end{document}